\def\C{\mathbb{C}}
\def\btau{\boldsymbol{\tau}}
\def\kb{\mathbf{k}}
\def\Hom{\mathrm{Hom}}
\def\Z{\mathbb{Z}}
\def\Q{\mathbb{Q}}
\def\qed{\hfill $\sqcap \hskip-6.5pt \sqcup$}
\def\N{\mathbb{N}}
\def\qlb{\overline{\mathbb{Q}}_l}
\def\e{\mathbf{e}}
\def\fqb{\overline{\mathbb{F}}_q}
\def\fq{\mathbb{F}_q}
\def\d{\mathbf{d}}
\def\v{\mathbf{v}}
\def\w{\mathbf{w}}
\def\Hom{\operatorname{Hom}\nolimits}
\theoremstyle{plain}
\newtheorem{theo}{Theorem}[section]
\newtheorem{lem}[theo]{Lemma}
\newtheorem{prop}[theo]{Proposition}
\newtheorem{cor}[theo]{Corollary}
\newtheorem{conj}[theo]{Conjecture}
\theoremstyle{definition}
\theoremstyle{remark}
\numberwithin{equation}{section}
\title{On the number of points of the Lusztig nilpotent variety over finite
fields}
\author{O. Schiffmann}
\begin{document}

\begin{abstract}
We give a closed expression for the number of points over finite fields of the
Lusztig nilpotent variety
associated to any quiver without edge loops, in terms of Kac's $A$-polynomial.
We conjecture a similar
result for quivers in which edge loops are allowed. Finally, we give a formula
for the number of points
over a finite field of the various stratas of the Lusztig nilpotent variety
involved in the geometric
realization of the crystal graph. 

\end{abstract}

\maketitle

\vspace{.2in}

\section{Statement of the result}

\paragraph{\textbf{1.1.}} Let $Q=(I,H)$ be a finite quiver, with vertex set $I$
and edge set $H$. For $h \in H$ we 
will denote by $h', h''$ the initial and terminal vertex of $h$. Note that we
allow edge loops, i.e. edges $h$ satisfying $h'=h''$. As usual we denote by
$$\langle \d, \e\rangle=\sum_{i \in I} d_ie_i - \sum_{h\in H} d_{h'}e_{h''}$$
the Euler form on $\Z^I$, and by $(\;,\;)$ its symmetrized version  :
$(\d,\e)=\langle \d, \e \rangle + \langle \e,\d\rangle$. We also set $\d \cdot \mathbf{e}=\sum_i d_ie_i$.
Let $\overline{Q}=(I, H \sqcup H^*)$ be the doubled quiver, obtained from $Q$ by
replacing each arrow $h$
by a pair of arrows $(h, h^*)$ going in opposite directions.

\vspace{.1in}

Fix a field $k$. For each dimension vector $\mathbf{d} \in \N^I$ we fix an
$I$-graded $k$-vector space $V_{\mathbf{d}}=\bigoplus_i V_i$ and we set
$$E_{\mathbf{d}}=\bigoplus_{h \in H} \Hom(V_{h'}, V_{h''}), \qquad
\overline{E}_{\mathbf{d}}=\bigoplus_{h \in H \sqcup H^*} \Hom(V_{h'},
V_{h''}).$$
Elements of $E_{\mathbf{d}}$ (resp. $\overline{E}_{\d}$) will be denoted by
$(x_h)_h$ (resp. by $(x_h, x_{h^*})_h$). We say that $(x_h)_h$ (resp. $(x_h,
x_{h^*})_h$) is nilpotent if there exists an $I$-graded flag 
$$L^{\bullet}=\big(\{0\}=L^0 \subset L^1 \subset \cdots \subset
L^{|\d|}=V_{\d}\big), \qquad |\d|=\sum_i d_i$$
 of vector subspaces in $V_{\d}$ such that
$$x_h (L^l) \subseteq L^{l-1}, \qquad l=0, \ldots, |\d|, \quad h \in H$$
(resp. such that
$$x_h (L^l) \subseteq L^{l-1}, \qquad x_{h^*}(L^l) \subseteq L^l, \qquad l=0,
\ldots, |\d|, \quad h \in H).$$
  The sets of nilpotent representations form Zariski closed subvarieties
$E^{nil}_{\d} \subseteq E_{\d}$ and $\overline{E}^{nil}_{\d} \subseteq
\overline{E}_{\d}$. When $Q$ has no edge loops, the above nilpotency condition
 for elements in $\overline{E}_{\d}$ is equivalent to the standard nilpotency
condition (i.e. that there exists an integer $N >0$ such that the composition
$x_{h_1} x_{h_2} \cdots x_{h_N}$ vanishes for any collection of edges $h_1,
\ldots, h_N$ in $H \sqcup H^*$). 

\vspace{.1in}

The group $G_{\mathbf{d}}=\prod_i GL(V_i)$ acts on $E_{\mathbf{d}}$ and
$\overline{E}_{\mathbf{d}}$ by conjugation, and preserves the subvarieties
${E}^{nil}_{\d},\overline{E}^{nil}_{\d}$. 

\vspace{.1in}

Let us now assume that $k=\C$. The trace map 
$$Tr: \overline{E}_{\d} \to \C, \qquad (x_h, x_{h^*})_h \mapsto \sum_h
tr(x_hx_{h^*})$$
identifies $\overline{E}_{\d}$ with $T^*E_{\d}$. We set
$\mathfrak{g}_{\d}=Lie(G_{\d})=\bigoplus_i \mathfrak{gl}(V_i)$ and identify
$\mathfrak{g}_{\d}$ with its dual $\mathfrak{g}_{\d}^*$ via the standard Killing
form. The moment map for the action of $G_{\d}$ on $T^*E_{\d}$ may be written as
$$\mu_{\d}: \overline{E}_{\d} \to \mathfrak{g}_{\d}, \qquad (x_h, x_{h^*})_h
\mapsto \sum_{h \in H} [x_h, x_{h^*}].$$
Following Lusztig (see \cite{LusJAMS}) we consider the intersection
$$\Lambda_{\d}=\mu_{\d}^{-1}(0) \cap \overline{E}^{nil}_{\d} \subset
\overline{E}_{\d}.$$
This variety is often called the \textit{Lusztig nilpotent variety} (see
\cite{Bozec} when $Q$ has edge loops). It is a closed subvariety of
$\overline{E}_{\d}$, which, in general, possesses many irreducible components
and is singular. In addition, $\Lambda_{\d}$ is a Lagrangian subvariety of
$\overline{E}_{\d}$ (see \cite{LusJAMS}, \cite{Bozec}). It plays an important
role in the geometric
approach to quantum groups and crystal graphs based on quiver varieties (see \cite{KS} or
\cite[Lect.~4]{SLectures2}).

\vspace{.2in}

\paragraph{\textbf{1.2.}} The map $\mu$ and hence the variety $\Lambda_{\d}$ are
defined over an arbitrary field $k$. The aim of this note is to establish a
formula
for the number of points of $\Lambda_{\d}$ over finite fields, in terms of Kac's
$A$-polynomial (\cite{Kac}, see below).
We will give a formula for the generating series of the
$|\Lambda_{\d}(\mathbb{F}_q)|$. Before we can state our result, we need to fix a
few notations. Let us consider the space
$$\mathbf{L}=\Q(t)[[z_i]]_{i \in I}$$
of power series in variables $z_i, i \in I$, with coefficients in the field
$\Q(t)$. Here $t$ is a formal
variable. For $\d \in \N^I$ we write $z^{\d}=\prod_i z_i^{d_i}$. Let $Exp:
\mathbf{L} \to \mathbf{L}$ be the $\lambda$-ring
version of the exponential map, i.e. $Exp(f)=exp \left(\sum_k \frac{1}{k}
\psi_k(f)\right)$, where $\psi_k: \mathbf{L} \to \mathbf{L}$ is the $k$th Adams
operator satisfying $\psi_k(z^{\d})=z^{k\d}$, $\psi_k(t^l)=t^{kl}$. For
instance, we have
$Exp(z^{\d})=1/(1-z^{\d})$.

\vspace{.2in}

\paragraph{\textbf{1.3.}} Let $A_{\d}(t)$ be Kac's $A$-polynomial attached to
the quiver $Q$ and to the dimension vector $\d$, i.e. for any finite field
$\mathbb{F}_q$, the integer
$A_{\d}(q)$ is equal to the number of isomorphism classes of absolutely
indecomposable representation of $Q$
of dimension $\d$ over $\mathbb{F}_q$. The existence of $A_{\d}(t)$ is due to
Kac (\cite{Kac}), as is the fact that
$A_{\d}(t) \in \Z[t]$ is unitary of degree $1-\langle \d ,\d \rangle$.
The fact that $A_\d(t)$ has positive coefficients was only recently proved in
\cite{HLV}. By Kac's theorem, $A_{\d}(t)=0$ unless $\d$ belongs to the set
$\Delta^+$ of positive roots of $Q$.
Let us denote by $\overline{A}_{\d}(t)=A_{\d}(t^{-1})t^{1-\langle \d, \d
\rangle}$ the reciprocal polynomial
of $A_{\d}(t)$.
Let us consider the element
\begin{equation}\label{E:main}
P_Q(t, z):=Exp\left( \frac{1}{t-1}\sum_{\d} \overline{A}_{\d}(t)t^{\langle \d,
\d \rangle} z^{\d}\right) = Exp\left( \frac{1}{1-t^{-1}}\sum_{\d}
{A}_{\d}(t^{-1}) z^{\d}\right) \in\mathbf{L}.
\end{equation}
If ${A}_{\d}(t)=\sum_n {a}_{\d,n}t^n$, the definition of $P_Q(t)$ may be
rewritten as follows~:
$$P_Q(t,z)=\prod_{\d \in \Delta^+}\;\; \prod_{l \geq 0}\;\; \prod_{n=\langle \d,
\d \rangle -1}^{0} \frac{1}{(1-t^{n-l}z^{\d})^{a_{\d,-n}}}.$$
Observe that the Fourier modes of $P_Q(t,z)$ are all rational functions in $t$
regular outside of $t=1$ and this allows us to evaluate $P_Q(t,z)$ at any $t
\neq 1$.

\vspace{.1in}

We may now state our main (and only) result~:

\begin{theo}\label{T:main} Assume that $Q$ has no edge loops and let us set
$$\lambda_Q(q,z)=\sum_{\d}
\frac{|\Lambda_{\d}(\mathbb{F}_q)|}{|G_{\d}(\mathbb{F}_q)|} q^{\langle \d, \d
\rangle}z^{\d} \in \Q[[z_i]]_i.$$
Then 
\begin{equation}\label{E:theo}
\lambda_Q(q,z)=P_{Q}(q,z).
\end{equation}
\end{theo}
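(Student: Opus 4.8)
The plan is to read the left-hand side of \eqref{E:theo} as a \emph{twisted groupoid count} over representations of $\overline{Q}$ satisfying $\mu_{\d}=0$, i.e. over modules for the preprojective algebra $\Pi_Q$, and to reduce it by the geometry of the moment map to a counting problem over the single quiver $Q$, which Kac's theorem controls. The starting point is the identity $\langle\xi,\mu_{\d}(x,x^*)\rangle=\langle\xi\cdot x,x^*\rangle$ (invariance of the trace form), which shows that $\mu_{\d}(x,x^*)=0$ is exactly the condition that $x^*$ lie in the conormal space $N^*_x=(T_x(G_{\d}\cdot x))^{\perp}$ to the $G_{\d}$-orbit of $x$. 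Thus $\mu_{\d}^{-1}(0)=\bigsqcup_{\mathcal{O}}N^*_{\mathcal{O}}$ is the union of the conormal bundles to all orbits $\mathcal{O}\subseteq E_{\d}$, and $\Lambda_{\d}$ is its nilpotent part.

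I would first dispose of the count without the nilpotency constraint. Each fibre $N^*_x=\Ker\mu_x$ is linear of dimension $\dim E_{\d}-\dim\mathfrak{g}_{\d}+\dim\operatorname{Stab}(x)$, so a fibrewise count over the cotangent directions together with $\langle\d,\d\rangle=\dim\mathfrak{g}_{\d}-\dim E_{\d}$ gives
\[
\sum_{\d}\frac{|\mu_{\d}^{-1}(0)(\mathbb{F}_q)|}{|G_{\d}(\mathbb{F}_q)|}\,q^{\langle\d,\d\rangle}z^{\d}=\sum_{[M]}\frac{q^{\dim\End(M)}}{|\Aut(M)|}z^{\dim M},
\]
the right-hand sum being over all isomorphism classes of representations of $Q$. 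Since $Q$ is loop-free the category of representations is hereditary and the weight $q^{\dim\End}/|\Aut|$ equals $1/(1-q^{-r})$ for an indecomposable with endomorphism field $\mathbb{F}_{q^r}$; the Euler identity $\sum_m w^m/\prod_{k=1}^m(1-q^{-k})=\prod_{k\ge 0}(1-q^{-k}w)^{-1}=\mathrm{Exp}\big(w/(1-q^{-1})\big)$ then shows each indecomposable contributes a factor $\mathrm{Exp}\big(z^{\dim N}/(1-q^{-1})\big)$. Grouping indecomposables into Galois orbits, which is precisely the effect of the Adams operators inside $\mathrm{Exp}$, and invoking Kac's theorem, this equals $\mathrm{Exp}\big(\tfrac{1}{1-q^{-1}}\sum_{\d}A_{\d}(q)z^{\d}\big)$. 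Hence the non-nilpotent analogue of \eqref{E:theo} holds, but with $A_{\d}(q)$ in place of $A_{\d}(q^{-1})$.

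Comparison with \eqref{E:main} shows that the entire content of the theorem lies in a single phenomenon: imposing nilpotency must turn $A_{\d}(q)$ into its reciprocal value $A_{\d}(q^{-1})=\overline{A}_{\d}(q)q^{\langle\d,\d\rangle-1}$. This is the main obstacle, and it is a genuine restriction: already for the Kronecker quiver at $\d=(1,1)$ one has $|\mu_{\d}^{-1}(0)|=q^3+q^2-q$ but $|\Lambda_{\d}|=2q^2-1$, and the substitution $q\mapsto q^{-1}$ is invisible on real roots (where $A_{\d}=1$ is self-reciprocal) and shows up only on the imaginary roots. I would attack it by computing, orbit by orbit, the nilpotent part of each conormal fibre, i.e. $\big|\{x^*\in N^*_x:(x,x^*)\in\overline{E}^{nil}_{\d}\}\big|$ for $x\in E^{nil}_{\d}$; in the worked examples this locus is again linear and its codimension inside $N^*_x$ produces exactly the $q^{-1}$-deficiency responsible for the reciprocal polynomial. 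The conceptual reason I expect to exploit is the Fourier--Sato/Verdier-duality symmetry on $\overline{E}_{\d}=T^*E_{\d}$: Fourier transform interchanges $E^{nil}_{\d}$ with the cotangent-fibre direction, is compatible with Lusztig's sheaves on $E^{nil}_{\d}$ whose Frobenius traces carry the Kac data, and sends $q$-weights to $q^{-1}$-weights; this is simultaneously the source of the reciprocation $A_{\d}(q)\rightsquigarrow A_{\d}(q^{-1})$ and of the degree shift by $q^{\langle\d,\d\rangle}$ forced by $\deg A_{\d}=1-\langle\d,\d\rangle$.

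Finally I would reassemble. Having expressed $|\Lambda_{\d}(\mathbb{F}_q)|$ as a sum over nilpotent orbits of the (polynomial) counts of their nilpotent conormal fibres, the same plethystic bookkeeping as in the non-nilpotent case applies verbatim once the indecomposable contribution is identified with $A_{\d}(q^{-1})/(1-q^{-1})$; a convenient way to organize this last step, and the one suggested by the abstract, is to stratify $\Lambda_{\d}$ by the crystal graph, count each stratum (an iterated affine bundle) as a monomial in $q$, and check that the resulting generating series coincides with $\mathrm{Exp}\big(\tfrac{1}{1-q^{-1}}\sum_{\d}A_{\d}(q^{-1})z^{\d}\big)=P_Q(q,z)$ by the reciprocity $\overline{A}_{\d}(t)=A_{\d}(t^{-1})t^{1-\langle\d,\d\rangle}$. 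The crux throughout is the passage from $A_{\d}(q)$ to $A_{\d}(q^{-1})$ induced by nilpotency; everything else is the standard Hall-algebra/$\lambda$-ring formalism.
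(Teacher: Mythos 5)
Your opening computation is correct and is in fact exactly \eqref{E:mucount}: identifying the fibre of $\mu_{\d}^{-1}(0)\to E_{\d}$ over $x$ with the conormal space to the orbit, so that the weighted count becomes $\sum_{[M]}q^{\dim\End(M)}/|\Aut(M)|\,z^{\dim M}=\text{Exp}\big(\tfrac{q}{q-1}\sum_{\d}A_{\d}(q)z^{\d}\big)$, is the standard (and easy) half of the story, and your numerical checks for the Kronecker quiver are right. You have also correctly located the crux: nilpotency must convert $A_{\d}(q)$ into $A_{\d}(q^{-1})$. But that step --- the entire content of the theorem --- is never proved in your proposal; it is only announced (``I would attack it by\ldots'', ``the conceptual reason I expect to exploit\ldots''). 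Both mechanisms you offer fail to constitute an argument. First, the claim that the nilpotent part of each conormal fibre $N^*_x$ is linear is false in general: already at $x=0$ the conormal fibre is all of $E_{\d}^*$ and its nilpotent locus is the nilpotent cone of the opposite quiver, which is not linear as soon as $Q$ contains an oriented cycle (for the cyclic quiver with two vertices and $\d=(1,1)$ it is the union of two lines $\{ab=0\}$); and even where these loci are linear, their codimensions depend on the orbit in a complicated way, and no mechanism is given that assembles these orbit-by-orbit counts into the reciprocal polynomial. Second, the Fourier--Sato/Verdier-duality symmetry is invoked but never turned into a statement about $\mathbb{F}_q$-points; doing so is precisely the difficulty, and indeed the paper explicitly records as an open problem (Section 1.5, ``Poincar\'e duality'') the task of deducing the relation between the counts of $\Lambda_{\d}$ and $\mu_{\d}^{-1}(0)$ directly from the geometry of the stacks $[\Lambda_{\d}/G_{\d}]$ and $[\mu_{\d}^{-1}(0)/G_{\d}]$ --- which is exactly what you are attempting. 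Finally, your reassembly step is circular: the crystal strata $\Lambda_{\d;\mathbf{n}}$ are not iterated affine bundles with monomial counts; by Proposition~\ref{P:crystal} they are affine fibrations over $[\Lambda_{\d;0}/G_{\d}]\times[\Lambda^J_{\mathbf{n}}/G_{\mathbf{n}}]$, and the count of the stratum $\Lambda_{\d;0}$ is obtained in Section 3 as a \emph{consequence} of Theorem~\ref{T:main}, not independently of it.

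For comparison, the paper's proof performs the $q\mapsto q^{-1}$ reciprocation not on the singular stacks but on smooth, proper-over-affine Nakajima quiver varieties, where it has a rigorous meaning: Hausel's formula \eqref{E:proof5} computes $P_c(\mathcal{M}(\v,\w)/\C,t)$; the homotopy equivalence of $\mathcal{M}(\v,\w)/\C$ with the projective Lagrangian $\mathcal{L}(\v,\w)/\C$ plus Poincar\'e duality gives $P_c(\mathcal{L}(\v,\w)/\C,t)=t^{2d(\v,\w)}P_c(\mathcal{M}(\v,\w)/\C,t^{-1})$, i.e. \eqref{E:proof7} --- this is the actual source of the substitution $t\mapsto t^{-1}$ you are after; purity and strict polynomial count (Proposition~\ref{P:ply}, via a Bialynicki--Birula decomposition over $\Z$) convert these Betti numbers into point counts over $\fq$; and the Grassmannian inversion of Corollary~\ref{C:proof2} extracts $|\Lambda_{\v}(\fq)|$ from the $|\mathcal{L}(\v,\w)(\fq)|$, after which Hua's formula \eqref{E:proof4.5} finishes the computation. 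The essential external input is Hausel's theorem (itself a deep result), and nothing in your sketch replaces it. So: the easy half is right, the hard half is a statement of intent rather than a proof.
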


\vspace{.1in}

\begin{conj}\label{C:main} Theorem~\ref{T:main} holds without any assumption on
$Q$.
\end{conj}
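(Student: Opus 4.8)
The plan is to read the left-hand side as a weighted groupoid count and to reduce it, by a ``dimensional reduction'' along the moment map, to a counting problem for representations of $Q$ alone, where Kac's polynomial can enter. Recall that a point of $\mu_{\d}^{-1}(0)$ is exactly a module over the preprojective algebra $\Pi_Q$, and $\Lambda_{\d}$ is its nilpotent locus; hence $|\Lambda_{\d}(\mathbb{F}_q)|/|G_{\d}(\mathbb{F}_q)|=\sum_{[M]}|\Aut M|^{-1}$, the sum running over isomorphism classes of nilpotent $\Pi_Q$-modules of dimension $\d$. The goal then becomes to show that $\sum_{\d}\big(q^{\langle\d,\d\rangle}\sum_{[M]}|\Aut M|^{-1}\big)z^{\d}$ equals the plethystic exponential $P_Q(q,z)$.

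The key geometric input is the structure of the fibers of $\mu_{\d}$. For $x\in E_{\d}$ one has $\mu_{\d}^{-1}(0)=\{(x,x^*):x^*\in(\mathfrak{g}_{\d}\cdot x)^{\perp}\}$, so the fiber over $x$ is the conormal space to the $G_{\d}$-orbit, of dimension $\dim E_{\d}-\dim\mathfrak{g}_{\d}+\dim\End(x)=\dim\Ext^1_Q(x,x)$, using $\dim E_{\d}-\dim\mathfrak{g}_{\d}=-\langle\d,\d\rangle$ and the Euler form identity $\dim\End(x)-\dim\Ext^1_Q(x,x)=\langle\d,\d\rangle$. Summing over orbits yields the clean identity $q^{\langle\d,\d\rangle}\,|\mu_{\d}^{-1}(0)(\mathbb{F}_q)|/|G_{\d}(\mathbb{F}_q)|=\sum_{[x]}|\End(x)|/|\Aut(x)|$ for the full moment fiber, the sum now over isomorphism classes of $Q$-representations. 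I would then cut this down to $\Lambda_{\d}$: the projection $(x,x^*)\mapsto x$ carries $\Lambda_{\d}$ into $E^{nil}_{\d}$, and nilpotency of the pair restricts $x^*$ to the flag-preserving part of the conormal fiber, so one must track how this restriction modifies the orbit-wise count. Since $P_Q$ depends only on the underlying graph, I would also invoke orientation-independence of $|\Lambda_{\d}(\mathbb{F}_q)|$ (via the Fourier--Deligne transform) to pass, when convenient, to an acyclic orientation, where $E^{nil}_{\d}=E_{\d}$.

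The final step is to evaluate the resulting representation-theoretic series by a Hua-type Hall-algebra identity. For an indecomposable $x$ whose endomorphism ring is local with residue field $\mathbb{F}_{q^d}$, one has $|\End(x)|/|\Aut(x)|=(1-q^{-d})^{-1}$, which is precisely the local factor producing the coefficient $\tfrac{1}{1-q^{-1}}$ in the exponent of $P_Q$; the appearance of the reciprocal $A_{\d}(q^{-1})$ rather than $A_{\d}(q)$ reflects the self-dual ($|\End|/|\Aut|$) nature of the moment-fiber weight. Summing over all indecomposables, with the Adams operators $\psi_k$ in $\mathrm{Exp}$ encoding the fields of definition (indecomposable versus absolutely indecomposable) and Kac's theorem identifying $A_{\d}(q)$ with the count of absolutely indecomposables, should produce $\mathrm{Exp}\big(\tfrac{1}{1-q^{-1}}\sum_{\d}A_{\d}(q^{-1})z^{\d}\big)=P_Q(q,z)$. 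The scheme can be checked on the one-vertex quiver, where it collapses to the classical identity $\sum_{d}q^{d^{2}}|GL_{d}(\mathbb{F}_q)|^{-1}z^{d}=\prod_{i\ge 0}(1-q^{-i}z)^{-1}$, matching $P_Q=\mathrm{Exp}\big(\tfrac{z}{1-q^{-1}}\big)$.

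The main obstacle is the interplay of the nilpotency condition with the non-multiplicativity of the weights. The weight $|\End(x)|/|\Aut(x)|$ is not multiplicative under direct sums, since the cross terms $\Hom(x_i,x_j)$ between distinct indecomposable summands intervene, so the plethystic factorization is not formal; similarly, passing from $\mu_{\d}^{-1}(0)$ to $\Lambda_{\d}$ genuinely discards non-nilpotent $\Pi_Q$-modules and must be controlled rather than ignored. The heart of the proof is therefore the precise Hall-algebra identity that reorganizes these contributions into a plethystic exponential, forcing the delicate cross terms to cancel and Kac's polynomial to emerge; such identities are extremely sensitive to the normalization and must be established through Green's and Hua's formulas together with Kac's theorem, not merely guessed.
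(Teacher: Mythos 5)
You should note first that the statement you set out to prove is Conjecture~\ref{C:main}: the paper offers \emph{no} proof of it, and the paper's proof of Theorem~\ref{T:main} (Nakajima quiver varieties, Hausel's Poincar\'e polynomial formula, a Bialynicki--Birula purity argument, and an inclusion--exclusion over the strata $\mathcal{L}(\v,\w)_{\w'}$) is unavailable exactly when $Q$ has edge loops, which is the only case the conjecture adds. Measured against that, your proposal is a research plan rather than a proof, and its missing step is the entire content of the statement. Your conormal-fiber computation is correct, but summing the weight $|\End(x)|/|\Aut(x)|$ over isomorphism classes computes $|\mu_{\d}^{-1}(0)(\fq)|$, i.e.\ it yields formula (\ref{E:mucount}), whose right-hand side involves $A_{\d}(q)$ --- not the $A_{\d}(q^{-1})$ demanded by (\ref{E:main}). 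The passage from $\mu_{\d}^{-1}(0)$ to its nilpotent locus $\Lambda_{\d}$, equivalently the inversion $q \mapsto q^{-1}$, is precisely the ``Poincar\'e duality'' phenomenon that the paper isolates in Section 1.5 and explicitly states it does not know how to establish directly on the singular stacks $[\Lambda_{\d}/G_{\d}]$ and $[\mu_{\d}^{-1}(0)/G_{\d}]$; the paper obtains it only by passing to the smooth stable quotients $\mathcal{M}(\v,\w)$ and invoking Hausel's cohomological results, which is where the no-loop hypothesis enters irreversibly. Your text defers exactly this step to an unspecified ``precise Hall-algebra identity'' that ``must be established through Green's and Hua's formulas \dots, not merely guessed'' --- that is, it names the gap instead of closing it.

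Moreover, the specific tools you do propose fail in the only case at stake. The fiber of $\Lambda_{\d} \to E_{\d}$ over a point $x$ is not a linear ``flag-preserving part'' of the conormal space, because nilpotency is a condition on the pair: already for the Kronecker quiver with $\d=(1,1)$, taking $x=(x_{h_1},x_{h_2})=(1,0)$ and $x^*=(x_{h_1^*},x_{h_2^*})=(0,1)$ one has $\mu_{\d}(x,x^*)=0$, yet the pair is not nilpotent since the cycle $x_{h_2^*}x_{h_1}$ acts as the identity on $V_1$; so the fiber over $x$ is a proper, generally non-linear subvariety of the conormal space whose point count is not a function of $\dim\End(x)$ and $\dim\Ext^1(x,x)$ alone. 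And your fallback --- a Fourier--Deligne reduction to an acyclic orientation where $E^{nil}_{\d}=E_{\d}$ --- is void precisely when $Q$ has an edge loop: a loop cannot be reoriented away, and the asymmetric nilpotency condition of the paper ($x_h(L^l)\subseteq L^{l-1}$ but $x_{h^*}(L^l)\subseteq L^l$) is not exchanged correctly by a Fourier transform in a loop direction. So even if your Hall-algebra identity were supplied, the argument as designed would at best recover Theorem~\ref{T:main}, which is already proved; it does not engage the edge-loop case at all --- for instance it offers no explanation of why, for the Jordan quiver, the \emph{full} Kac polynomial $A_{\d}(t)=t$ (counting Jordan blocks with arbitrary eigenvalue) governs the count of a variety on which the loop $x_h$ is required to be nilpotent, which is the first nontrivial instance of the conjecture.
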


\vspace{.1in}

\noindent
\textit{Remarks.} i) Set $\zeta_{\d}(q,u)=exp \left( \sum_{l \geq 1} A_{\d}(q^l)
\frac{u^l}{l}\right)$. This may be thought
of as the 'zeta function' of the quotient
$$\mathcal{M}_{\d}^{abs. ind}=\{ (x_h)_h \in E_{\d}\;|\; (x_h \otimes
\overline{\mathbb{F}_{q}})_h\;\text{ indecomposable}\}/ G_{\d}.$$
Of course, $\mathcal{M}^{abs. ind}_{\d}$ is not an algebraic variety in any
sense -- it is only a constructible subset of the stack $\mathcal{M}_{\d} =
E_{\d} /G_{\d}$ classifying representations of $Q$ of dimension $\d$, and we can
not
speak of its zeta function in any precise sense. Using this notation, we may
restate our result as the equality
$$\lambda_Q(q,z)=\prod_{\d \in \Delta^+}\prod_{l \geq 0}
\zeta_{\d}(q^{-1},q^{-l}z^{\d}).$$
ii) It is possible to give a closed expression for the power series $P_Q(t,z)$,
as an immediate consequence
of Hua's formula, see (\ref{E:proof4.5}).

\vspace{.2in}

\paragraph{\textbf{1.4.}} Let us provide a few simple examples for
Theorem~\ref{T:main} and Conjecture~\ref{C:main}.

\vspace{.1in}

\noindent
i) Assume that $Q$ is a finite Dynkin quiver. Then $A_{\d}=1$ for all $\d\in
\Delta^+$, and thus
$$\lambda_Q(q,z)=\prod_{\d \in \Delta^+} \prod_{l \geq 0}
\frac{1}{1-q^{-l}z^{\d}}=Exp\left(\frac{q}{q-1} \sum_{\d \in \Delta^+} z^{\d}
\right).$$

\vspace{.1in}

\noindent
ii) Now let us assume that $Q$ is an affine quiver. Then $\Delta^+=\Delta_{im}^+
\sqcup \Delta^+_{re}$ with
$$\Delta^+_{im}=\N_{\geq 1}\boldsymbol{\delta}, \qquad
\Delta^+_{re}=\{\Delta^+_0 + \N\boldsymbol{\delta}\} \sqcup \{\Delta_0^- +
\N_{\geq 1} \boldsymbol{\delta}\}$$
where $\boldsymbol{\delta}$ is the minimal imaginary root, and where $\Delta_0$
is the root system of an underlying finite type subquiver $Q_0 \subset Q$. We
have $A_{\d}(t)=1$ for $\d \in \Delta^+_{re}$ while $A_{\d}(T)=t+r$, with
$r=rank(Q_0)=|I|-1$. This yields
$$\lambda_Q(q,z)= Exp\left( \frac {\sum_{d \in \Delta_0^+} qz^{\d}
  +(1+rq)z^{\boldsymbol{\delta}} + \sum_{d \in \Delta_0^-}
qz^{\d+\boldsymbol{\delta}}}{(q-1)(1-z^{\boldsymbol{\delta}})}\right).$$

\vspace{.1in}

\noindent
iii) Finally, consider the Jordan quiver $Q$ with one vertex and one loop. Then
$\Delta^+=\N_{\geq 1}$ and $A_{\d}(t)=t$ for all $\d \geq 1$. In this case,
$\Lambda_{\d}$ is just the variety 
$\mathcal{C}om^{\bullet, nil}_{\mathfrak{gl}_\d}$ of pairs of commuting $\d
\times \d$-matrices, with
the second matrix being nilpotent. Theorem~\ref{T:main}
is not applicable here, but a direct computation (see \cite{R-V}) yields, in
accordance with Conjecture~\ref{C:main}
$$\sum_{\d \geq 0} \frac{|\mathcal{C}om^{\bullet,
nil}_{\mathfrak{gl}_\d}(\mathbb{F}_q)|}{|GL_\d(\mathbb{F}_q|}
z^{\mathbf{d}}=\prod_{\d\geq 1} \prod_{l \geq 0} \frac{1}{1-q^{-l}z^\d}=Exp
\left( \frac{qz}{(q-1)(1-z)}\right).$$ 
The above formula should be compared to the Feit-Thompson formula for the number
of points in the (non-nilpotent)
commuting varitieties over finite fields (see \cite{Feit}).

\vspace{.1in}

\noindent
\textit{Remark.} The same techniques allow one to prove the following similar
formula
$$\sum_{\d \geq 0} \frac{|\mathcal{C}om^{nil,
nil}_{\mathfrak{gl}_\d}(\mathbb{F}_q)|}{|GL_\d(\mathbb{F}_q|}
z^{\mathbf{d}}=\prod_{\d\geq 1} \prod_{l \geq 1} \frac{1}{1-q^{-l}z^\d}=Exp
\left( \frac{z}{(q-1)(1-z)}\right).$$ 
where $\mathcal{C}om^{nil, nil}_{\mathfrak{gl}_\d}$ is now the variety of pairs
of commuting nilpotent matrices.
One might ask if there is an analogous formula for an arbitrary quiver (allowing
edge loops), when one counts the
number of points of the nilpotent variety $\Lambda^{nil}_{\d} \subset
\Lambda_{\d}$ defined using the \textit{standard}
nilpotency condition. In view of Conjecture~\ref{C:main}, one might expect that
$$\sum_{\d} \frac{|\Lambda_{\d}(\mathbb{F}_q)|}{|G_{\d}(\mathbb{F}_q)|}
q^{\langle \d, \d \rangle}z^{\d}
=Exp\left( \frac{1}{t-1}\sum_{\d} \overline{A}^{nil}_{\d}(t)t^{\langle \d, \d
\rangle} z^{\d}\right)$$
where $A^{nil}_{\d}(t)$ counts the number of absolutely indecomposable
representations of $Q$
of dimension $\d$ in which all edge loops act nilpotently.

\vspace{.2in}

\paragraph{\textbf{1.5.}} We finish the section with two short remarks.

\vspace{.1in}

\noindent
\textit{Poincar\'e duality.} It is not difficult to show that
\begin{equation}\label{E:mucount}
\sum_{\d} \frac{|\mu_{\d}^{-1}(0)(\mathbb{F}_q)|}{|G_{\d}(\mathbb{F}_q)|}
q^{\langle \d, \d \rangle}z^{\d}=Exp \left(
\frac{q}{q-1} \sum_{\d} A_{\d}(q)z^{\d}\right)
\end{equation}
(see \cite[Thm.5.1]{Mozgovoy2} for a similar result in the context of
Donaldson-Thomas theory). The stack quotient
$[\Lambda_{\d}/G_{\d}]$ is Lagrangian inside $[\mu_{\d}^{-1}(0)/G_{\d}]$.
Comparing (\ref{E:mucount})
with (\ref{E:main}) we see that, as far as point counting goes,
$[\Lambda_{\d}/G_{\d}]$ and $[\mu_{\d}^{-1}(0)/G_{\d}]$ are in some sense
Poincar\'e dual to each other. In fact, our proof of Theorem~\ref{T:main} uses
such a Poincar\'e duality for appropriate \textit{stable} (in particular smooth)
quotients of $\Lambda_{\d}$ and $\mu_{\d}^{-1}(0)$, see Section 2.1. It would be
interesting to be able to deduce the relation between the point counts of
$\Lambda_{\d}$ and $\mu_{\d}^{-1}(0)$ directly from geometric properties of the
stacks $[\Lambda_{\d}/G_{\d}]$
and $[\mu_{\d}^{-1}(0)/G_{\d}]$.

\vspace{.15in}

\noindent
\textit{Kac's conjecture.} Let $Q$ be a quiver without edge loops and let
$\mathfrak{g}$ be the associated Kac-Moody 
algebra. By a theorem of Kashiwara-Saito (\cite{KS}, conjectured by Lusztig in \cite{Lusconj}), the number of irreducible components of
$\Lambda_{\d}$ is the dimension of the $\d$ root space in the envelopping
algebra $U^+(\mathfrak{g})$. By the Lang-Weil theorem, we have
$$|\Lambda_{\d}(\mathbb{F}_q)|=| Irr(\Lambda_{\d})| q^{dim(\Lambda_{\d})} +
O(q^{\text{dim}(\Lambda_{\d})-1/2})$$
from which it follows that
$$ \frac{|\Lambda_{\d}(\mathbb{F}_q)|}{|G_{\d}(\mathbb{F}_q)|} q^{\langle \d, \d
\rangle}=|Irr(\Lambda_{\d})|
+O(q^{-1/2})= \text{dim}( U^+(\mathfrak{g})[\d]) + O(q^{-1/2})$$
which we may write as
\begin{equation}\label{E:Kac1}
\lambda_{Q}(q,z)=\sum_{\d} \text{dim}( U^+(\mathfrak{g})[\d]) z^{\d} +
O(q^{-1/2}).
\end{equation}
On the other hand, by Theorem~\ref{T:main} we have
\begin{equation}\label{E:Kac2}
\lambda_{Q}(q,z)=Exp\left( \sum_{\d} a_{\d,0}z^{\d} \right) +
O(q^{-1/2})=\prod_{\d} 
\frac{1}{(1-z^{\d})^{a_{\d,0}}}+ O(q^{-1/2}).
\end{equation}
By the PBW theorem, $\sum_{\d}
\text{dim}(U^+(\mathfrak{g})[\d])z^{\d}=\prod_{\d}
(1-z^{\d})^{-\text{dim}(\mathfrak{g}[\d])}$. Combining (\ref{E:Kac1}) and
(\ref{E:Kac2}) we obtain that $a_{\d,0}=\text{dim}(\mathfrak{g}[\d])$, which is
the statement of Kac's conjecture. This conjecture was proved by Hausel in
\cite{Hausel}, using his
computation of the Betti numbers of Nakajima quiver varieties. Note that our
derivation of Theorem~\ref{T:main}
uses Hausel's result in a crucial manner, so that the above is not a new proof
of Kac's conjecture but rather a
reformulation of Hausel's proof in terms of Lusztig nilpotent varieties instead
of Lagrangian Nakajima quiver varieties (more in the spirit of \cite{CBVdB}).

\vspace{.2in}

\section{Proof of the Theorem}

\vspace{.2in}

\paragraph{\textbf{2.1.}} We assume henceforth that the quiver $Q$ has no edge
loops. Let $\kb$ be a field. We recall the definition of Nakajima quiver
varieties and state some of their properties (see \cite{Nakajima} for details).
Fix $\v, \w \in \N^I$, let $V, W$ be $I$-graded $\kb$-vector spaces of
respective dimensions $\v, \w$ and set
$$M(\v,\w) =\bigoplus_{h \in H \sqcup H^*} \text{Hom}(V_{h'}, V_{h''}) \oplus
\bigoplus_{i \in I} \text{Hom}(V_i, W_i) \oplus \bigoplus_{i \in I}
\text{Hom}(W_i, V_i).$$
Elements of $M(\v,\w)$ will be denoted $(\underline{x}, \underline{p},
\underline{q})$. The space
$M(V,W)$ carries a natural symplectic structure, and the group $G_{\v}=\prod_i
GL(V_i)$ acts in a Hamiltonian
fashion. The associated moment map may be written as follows~:
\begin{equation}
\begin{split}
\mu~: M(\v,\w) & \longrightarrow \bigoplus_i gl(V_i) \\
(\underline{x}, \underline{p}, \underline{q}) & \mapsto \bigg( \sum_{\substack{h
\in H \\ h'=i}} x_{h^*}x_h - \sum_{\substack{h \in H\\ h''=i}} x_{h}x_{h^*} +
q_ip_i\bigg)_i
\end{split}
\end{equation}
The (categorical) symplectic quotient of $M(\v,\w)$ by $G_\v$ is the variety
$\mathcal{M}_0(\v,\w)=\mu^{-1}(0) /\hspace{-.05in}/ G_{\v}$. It is an affine
variety which is singular in general. We say that an element
$(\underline{x}, \underline{p}, \underline{q}) \in \mu^{-1}(0)$ is semistable if
the following condition is satisfied~:
$$\big(\;V' \subset \bigoplus_i \text{Ker}(q_i)\;\;and\;\; \underline{x}(V')
\subset V' \; \big)\Rightarrow V'=\{0\}.$$
We denote by $\mu^{-1}(0)^s$ the open subset of $\mu^{-1}(0)$ consisting of
semistable points. The (geometric) quotient $\mathcal{M}(\v,\w)=\mu^{-1}(0)^s/
G_{\v}$ is a smooth symplectic  quasiprojective variety, and
there is a natural projective morphism $\pi : \mathcal{M}(\v,\w) \to
\mathcal{M}_0(\v,\w)$. The dimension
of $\mathcal{M}(\v,\w)$ is equal to $2d(\v,\w)=2\v \cdot \w - (\v,\v)$.
We put
$\mathcal{L}(\v,\w)=\pi^{-1}(0)$. It is known that
$$\mathcal{L}(\v,\w)=\{ G_{\v}\cdot (\underline{x}, \underline{p},
\underline{q}) \in \mathcal{M}(\v,\w)\;|\; \underline{p}=0,
\underline{x}\;is\;nilpotent\}$$
and that $\mathcal{L}(\v,\w)$ is Lagrangian in $\mathcal{M}(\v,\w)$. When we want to specify the field over which
we consider $\mathcal{M}(\v,\w)$ or $\mathcal{L}(\v,\w)$ we write $\mathcal{M}(\v,\w)/\kb, \mathcal{L}(\v,\w)/\kb$.
In fact, the definitions of  $\mathcal{M}(\v,\w)$ and  $\mathcal{L}(\v,\w)$ make sense over any commutative ring $R$
and we will use the notation  $\mathcal{M}(\v,\w)/R, \mathcal{L}(\v,\w)/R$ for the corresponding $R$-schemes.

\vspace{.2in}

\paragraph{\textbf{2.2}} We assume that $\mathbf{k}=\C$ in this paragraph. 
The varieties
$\mathcal{L}(\v,\w)/\C$ and $\mathcal{M}(\v,\w)/\C$ are homotopic.
The Betti numbers of $\mathcal{M}(\v,\w)/\C$ have been
computed by T. Hausel (see \cite{Hausel}, and see \cite{Mozgovoy} for another
proof). Before we state this result we need a few notations. For $n \in \N$ and
$\mathbf{n}=(n_i)_i \in \N^I$ we set
$$[\infty, n]=\prod_{k=1}^n (1-t^k)^{-1}, \qquad [\infty, \mathbf{n}]=\prod_i
[\infty, n_i].$$
Let $\boldsymbol{\tau}=(\tau^i)_i$ be a set of partitions indexed by $I$. We put
$$|\boldsymbol{\tau}|=\sum_i |\tau^i|, \qquad \boldsymbol{\tau}_l=(\tau^i_l)_i
\in \N^I$$
and 
$$X(\boldsymbol{\tau},t)=\prod_{k} t^{\langle \btau_k, \btau_k\rangle} [\infty,
\btau_{k}-\btau_{k+1}].$$
Define a power series in $\mathbf{L}$ depending on a vector $\w \in \Z^I$ as 
$$ r(\w,t,z)=\sum_{\btau}t^{\w \cdot \btau_1}X(\btau,t^{-1})z^{|\btau|}.$$

\vspace{.1in}

\begin{theo}[Hausel]  The varieties
$\mathcal{M}(\v,\w)/\C$ have a pure Hodge structure, vanishing odd cohomology, and
their Poincar\'e polynomials for compactly supported cohomology
$$P_c(\mathcal{M}(\v,\w)/\C,t):=\sum_i \text{dim}
\;H^{2i}_c(\mathcal{M}(\v,\w)/\C,\C)t^i$$
 are determined by the following formula~:
\begin{equation}\label{E:proof5}
\sum_{\v}
t^{-d(\v,\w)}P_c(\mathcal{M}(\v,\w)/\C,t)z^{\v}=\frac{r(\w,t,z)}{r(0,t,z)}.
\end{equation}
\end{theo}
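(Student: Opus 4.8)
The plan is to reduce the computation of $P_c(\mathcal{M}(\v,\w)/\C,t)$ to a point count over finite fields, using the good geometry of Nakajima quiver varieties. First I would introduce the dilation $\C^*$-action scaling the symplectic form (acting with weight one on a Lagrangian-complementary family of the linear coordinates, e.g. the $x_{h^*}$ and the $q$ directions). As in Nakajima's work, the limit as $s\to 0$ of $s\cdot m$ exists for every $m$, and the fixed-point set is projective; that is, $\mathcal{M}(\v,\w)/\C$ is semi-projective, with the compact core $\mathcal{L}(\v,\w)$ as a deformation retract (consistent with the homotopy equivalence recalled in \textbf{2.2}). For such varieties the Bialynicki--Birula decomposition exhibits $\mathcal{M}(\v,\w)$ as a disjoint union of affine-space bundles over the fixed components, which forces the cohomology to be pure, concentrated in even degree, and of Hodge--Tate type. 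Granting this, the $\ell$-adic incarnation gives $|\mathcal{M}(\v,\w)(\fq)|=P_c(\mathcal{M}(\v,\w)/\C,q)$, so it suffices to evaluate the point counts and then substitute $q\mapsto t$; the normalization $t^{-d(\v,\w)}$ is the compactly-supported Poincaré-duality shift attached to $\dim\mathcal{M}(\v,\w)=2d(\v,\w)$.

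Second, I would compute $|\mathcal{M}(\v,\w)(\fq)|$ by passing to the quotient stack $[\mu^{-1}(0)^s/G_\v]$ and localizing with respect to the larger torus $\mathbb{T}=T_\w\times\C^*$, where $T_\w\subset G_\w$ is the maximal torus of the framing group and $\C^*$ is the dilation above. For a generic cocharacter, $T_\w$ splits $W$ into one-dimensional weight spaces, each attached to a single vertex, and a $\mathbb{T}$-fixed point forces $V$ to break up accordingly, so the fixed locus factorizes through quiver varieties with one-dimensional framing at a single vertex. Within such a factor the residual $\C^*$ grades $V=\bigoplus_k V_k$ by distance from the framing line, and the stability/generation condition makes the graded dimensions $(\dim V_k)_i$ the columns of an $I$-tuple of partitions $\btau=(\tau^i)_i$, with $\btau_l\in\N^I$ the $l$-th parts. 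This is exactly the combinatorial index set of the sum defining $r(\w,t,z)$.

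Third, I would compute the contribution of the component indexed by $\btau$. The residual automorphisms of each graded layer — the groups $\prod_i GL_{(\btau_k-\btau_{k+1})_i}$ — contribute the classifying-space factors $[\infty,\btau_k-\btau_{k+1}]$, i.e. the Poincaré series of the corresponding products of $BGL$; the self-pairings of the graded pieces in the deformation complex of the quiver variety produce the exponents $\langle\btau_k,\btau_k\rangle$, and the pairing against the framing line produces the prefactor $t^{\w\cdot\btau_1}$. Assembling these gives $t^{\w\cdot\btau_1}X(\btau,t^{-1})z^{|\btau|}$ per component, so summing over $\btau$ yields the numerator $r(\w,t,z)$. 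The denominator $r(0,t,z)$ records the $\v$-independent contribution of the gauge group (the $\w=0$ baseline), and dividing by it strips off this universal factor and converts the raw stacky count into the honest count of the stable locus, producing the stated ratio.

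The step I expect to be the main obstacle is the second and third together: pinning down the precise bijection between $\mathbb{T}$-fixed components and $I$-tuples of partitions, and then carrying out the weight bookkeeping so that the attracting weights reproduce $X(\btau,t^{-1})$ and the shift $t^{-d(\v,\w)}$ \emph{on the nose}, including the decreasing (partition) constraint that is implicit in the $[\infty,\btau_k-\btau_{k+1}]$ factors. Establishing purity and the vanishing of odd cohomology (the first step) is comparatively routine once semi-projectivity is in hand. An entirely parallel route, which I would use as a cross-check, bypasses localization and evaluates $\sum_\v q^{-d(\v,\w)}|\mathcal{M}(\v,\w)(\fq)|\,z^\v$ directly by a groupoid/Hall-algebra argument culminating in a Hua-type product formula; this again returns $r(\w,t,z)/r(0,t,z)$, and is the form of the argument used in Mozgovoy's proof.
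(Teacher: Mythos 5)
A preliminary remark: the paper itself contains no proof of this statement. It is quoted as a theorem of Hausel and used as an input, with the references [Ha] (and [M1] for an alternative proof); so your proposal has to be measured against those published arguments rather than against anything in this paper. Your first step is fine: the dilation action, semi-projectivity, the Bialynicki--Birula decomposition, and the resulting purity, even-degree concentration and polynomial-count property are exactly how purity is handled in this circle of ideas (it parallels the paper's own Proposition 2.3 for $\mathcal{L}(\v,\w)$), and it correctly reduces the theorem to a point count over $\fq$.

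The genuine gap is in your second and third steps. For a general quiver the $\mathbb{T}=T_\w\times\C^*$-fixed locus of $\mathcal{M}(\v,\w)$ is \emph{not} a union of components indexed by $I$-tuples of partitions: the $T_\w$-factorization into unit-framing pieces is correct, but inside a factor $\mathcal{M}(\v',\delta_i)$ the dilation-fixed components are graded quiver varieties, which are positive-dimensional projective varieties whose Betti numbers are essentially the unknowns one is trying to compute (this is why Nakajima's computation along these lines in [N3] is confined to $ADE$ quivers). A second, decisive symptom: the factors $[\infty,n]=\prod_{k=1}^n(1-t^k)^{-1}$ occurring in $X(\btau,t)$ are infinite power series, so they cannot be Poincar\'e polynomials of smooth projective fixed components; as you yourself observe, they are Poincar\'e series of classifying spaces $BGL_n$. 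This is the tell that $r(\w,t,z)$ is not a fixed-point sum attached to the variety $\mathcal{M}(\v,\w)$ but a stacky (groupoid) count, and hence that the asserted identity is a ratio of two stacky generating series. Consequently ``dividing by $r(0,t,z)$ strips off the universal gauge factor'' is not an argument: division of power series in $z$ is a convolution inverse, and passing from the stacky count of $[\mu^{-1}(0)/G_{\v}]$ to the honest count of the stable locus requires an actual stratification/inversion argument (Harder--Narasimhan-type, or Hausel's Fourier-transform evaluation of moment-map fibers combined with Hua's formula). In short, the mechanism that really produces $r(\w,t,z)/r(0,t,z)$ is precisely the arithmetic route you relegate to a ``cross-check'' in your last sentence: count $|\mathcal{M}(\v,\w)(\fq)|$ by harmonic analysis on the fibers of $\mu$ and Hua's formula, then use your step one to convert point counts into Betti numbers. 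That is Hausel's (and Mozgovoy's) proof; the equivariant-localization bookkeeping you propose as the core of the argument would not close.
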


\vspace{.1in}

\begin{prop} The varieties $\mathcal{L}(\v,\w)/\C$ have a vanishing odd cohomology, and
their Poincar\'e polynomials are determined by the following formula~:
\begin{equation}\label{E:proof55}
\sum_{\v}
t^{-d(\v,\w)}P_c(\mathcal{L}(\v,\w)/\C,t)z^{\v}=\frac{r(\v,t^{-1},z)}{r(0,t^{-1},z)}.
\end{equation}
\end{prop}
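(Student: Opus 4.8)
The plan is to deduce the Proposition directly from Hausel's theorem by relating the compactly supported cohomology of $\mathcal{L}(\v,\w)/\C$ to that of $\mathcal{M}(\v,\w)/\C$, exploiting three features of the pair. First, $\mathcal{L}(\v,\w)=\pi^{-1}(0)$ is the fibre of the projective morphism $\pi$ over a point, hence a projective (in particular compact) variety, so that its compactly supported and ordinary cohomologies coincide. Second, by the homotopy equivalence $\mathcal{L}(\v,\w)/\C \simeq \mathcal{M}(\v,\w)/\C$ recalled in \textbf{2.2}, the two spaces have isomorphic ordinary cohomology in every degree. Third, $\mathcal{M}(\v,\w)/\C$ is a smooth oriented variety of complex dimension $2d(\v,\w)$, so that Poincar\'e duality is available on it. The whole argument is then a matter of tracking degree shifts between these four cohomology theories.

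First I would record the bookkeeping. Writing $P(X,t)=\sum_i \dim H^{2i}(X,\C)t^i$ for the ordinary Poincar\'e polynomial (all cohomology lives in even degree, for $\mathcal{M}(\v,\w)/\C$ by Hausel's theorem and for $\mathcal{L}(\v,\w)/\C$ by the homotopy equivalence), compactness of $\mathcal{L}(\v,\w)$ gives
\[
P_c(\mathcal{L}(\v,\w)/\C,t)=P(\mathcal{L}(\v,\w)/\C,t),
\]
while the homotopy equivalence gives $P(\mathcal{L}(\v,\w)/\C,t)=P(\mathcal{M}(\v,\w)/\C,t)$. Poincar\'e duality on the smooth variety $\mathcal{M}(\v,\w)/\C$ of complex dimension $2d(\v,\w)$ yields $H^{2i}(\mathcal{M}(\v,\w)/\C)\cong H^{2(2d(\v,\w)-i)}_c(\mathcal{M}(\v,\w)/\C)^{\vee}$, that is
\[
P(\mathcal{M}(\v,\w)/\C,t)=t^{2d(\v,\w)}P_c(\mathcal{M}(\v,\w)/\C,t^{-1}).
\]
Chaining these three identities produces the key relation
\[
P_c(\mathcal{L}(\v,\w)/\C,t)=t^{2d(\v,\w)}P_c(\mathcal{M}(\v,\w)/\C,t^{-1}).
\]
The vanishing of the odd cohomology of $\mathcal{L}(\v,\w)/\C$ follows in the same breath: Hausel's theorem gives vanishing odd compactly supported cohomology for $\mathcal{M}(\v,\w)/\C$, hence by Poincar\'e duality vanishing odd ordinary cohomology, which transports to $\mathcal{L}(\v,\w)/\C$ through the homotopy equivalence and, by compactness, back to its compactly supported cohomology.

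Finally I would substitute the key relation into the generating series of the Proposition and compare with Hausel's formula (\ref{E:proof5}). Multiplying by $t^{-d(\v,\w)}z^{\v}$ and summing over $\v$ gives
\[
\sum_{\v}t^{-d(\v,\w)}P_c(\mathcal{L}(\v,\w)/\C,t)z^{\v}=\sum_{\v}t^{d(\v,\w)}P_c(\mathcal{M}(\v,\w)/\C,t^{-1})z^{\v},
\]
and the right-hand side is precisely the left-hand side of (\ref{E:proof5}) after the substitution $t\mapsto t^{-1}$, namely $r(\w,t^{-1},z)/r(0,t^{-1},z)$, which is the right-hand side of (\ref{E:proof55}).

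Since the argument is essentially Poincar\'e-duality bookkeeping, I do not expect a genuine obstacle once the inputs are in place; the one point that must be handled with care is the asymmetry between $\mathcal{L}(\v,\w)$ and $\mathcal{M}(\v,\w)$ — the former compact but singular, the latter smooth but open. It is exactly this asymmetry that forces the combination ``compactness for $\mathcal{L}$, Poincar\'e duality for $\mathcal{M}$, homotopy equivalence to link the two'' and that produces the inversion $t\mapsto t^{-1}$ distinguishing (\ref{E:proof55}) from (\ref{E:proof5}). A secondary point worth verifying is that the homotopy equivalence of \textbf{2.2} is genuine — it arises from a contracting $\C^{*}$-action on $\mathcal{M}(\v,\w)$ whose flow retracts the variety onto the central fibre $\mathcal{L}(\v,\w)$ — so that it does induce isomorphisms on cohomology in each degree.
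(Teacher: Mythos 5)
Your proof is correct and follows essentially the same route as the paper: the identical chain of identifications (compactness of $\mathcal{L}(\v,\w)/\C$, the homotopy equivalence with $\mathcal{M}(\v,\w)/\C$, and Poincar\'e duality on the smooth $\mathcal{M}(\v,\w)/\C$) yields the key relation $P_c(\mathcal{L}(\v,\w)/\C,t)=t^{2d(\v,\w)}P_c(\mathcal{M}(\v,\w)/\C,t^{-1})$, after which one substitutes into Hausel's formula exactly as you do. Note also that your numerator $r(\w,t^{-1},z)$ is the correct one; the $\v$ appearing in the numerator of the paper's displayed equation is evidently a typo for $\w$.
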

\begin{proof} By Poincar\'e duality, we have
\begin{equation*}
\begin{split}
\text{dim}\; H^{2i}_c(\mathcal{M}(\v,\w)/\C,\C)&= \text{dim}\;
H^{4d(\v,\w)-2i}(\mathcal{M}(\v,\w)/\C,\C)\\
&= \text{dim}\; H^{4d(\v,\w)-2i}(\mathcal{L}(\v,\w)/\C,\C)\\
&=\text{dim}\; H^{4d(\v,\w)-2i}_c(\mathcal{L}(\v,\w)/\C,\C).
\end{split}
\end{equation*}
The second and third equalities respectively come from the facts that
$\mathcal{M}(\v,\w)/\C$ and $\mathcal{L}(\v,\w)/\C$
are homotopic and that the latter is projective. We deduce that
\begin{equation}\label{E:proof7}
P_{c}(\mathcal{L}(\v,\w)/\C,t)=t^{2d(\v,\w)}P_c(\mathcal{M}(\v,\w)/\C,t^{-1}).
\end{equation}
Relation (\ref{E:proof55}) is
thus  a consequence of (\ref{E:proof5}).
\end{proof}

\vspace{.1in}

The relation between the function $r(\w,t,z)$ and the collection of Kac polynomials is given
by the following result of Hua, see \cite{Hua}~:

\vspace{.1in}

\begin{theo}[Hua]  We have
\begin{equation}\label{E:proof4.5}
r(0,t,z)=\text{Exp}\left(\frac{1}{t-1}\sum_{\mathbf{d}}
A_{\mathbf{d}}(t)z^{\mathbf{d}}\right)
\end{equation}
\end{theo}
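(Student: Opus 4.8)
The plan is to prove the identity after specializing the formal variable $t$ to $q$ for every finite field $\mathbb{F}_q$: since Kac's theorem gives $A_\d(t)\in\Z[t]$, every Fourier mode of each side is a rational function of $t$, so agreement at all prime powers $q$ forces the asserted identity in $\mathbf{L}$. The core is therefore to show, for each $q$, that $r(0,q,z)$ equals $\mathrm{Exp}\big(\tfrac{1}{q-1}\sum_\d A_\d(q)z^\d\big)$. I would do this in two movements: first give $r(0,q,z)$ a representation-theoretic meaning as a stacky point count, then evaluate that count through Krull--Schmidt and Galois descent.

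First I would identify $r(0,q,z)$ as a groupoid count, namely
\begin{equation*}
r(0,q,z)=\sum_\d\frac{\big|\{(x,N)\;:\;x\in E_\d(\mathbb{F}_q),\ N\in\mathfrak{g}_\d(\mathbb{F}_q)\ \text{nilpotent},\ [x,N]=0\}\big|}{|G_\d(\mathbb{F}_q)|}\,z^\d .
\end{equation*}
To see this I would stratify the pairs by the conjugacy class of $N$. As $N=(N_i)_i\in\bigoplus_i\mathfrak{gl}(V_i)$ respects the $I$-grading, its type is a tuple of partitions $\btau=(\tau^i)_i$ with $|\btau|=\d$, where $\btau_k=(\tau^i_k)_i$ records the dimension vector of the $k$-th layer $\ker N^{k}/\ker N^{k-1}$. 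The size of the conjugacy class contributes $\prod_i|Z_{GL(V_i)}(N_i)|^{-1}$ by the classical centralizer formula (Macdonald), whose $q$-power is $q^{-\sum_k\btau_k\cdot\btau_k}$ and whose remaining factor is $\prod_k[\infty,\btau_k-\btau_{k+1}]$ at $t^{-1}$; the $x_h$ commuting with $N$ are maps of nilpotent $\mathbb{F}_q[t]$-modules, giving $\prod_h q^{\dim\mathrm{Hom}}$ with $\dim\mathrm{Hom}(M_{h'},M_{h''})=\sum_k(\btau_k)_{h'}(\btau_k)_{h''}$. The two $q$-exponents assemble into $q^{-\sum_k\langle\btau_k,\btau_k\rangle}$ via $\langle\btau_k,\btau_k\rangle=\btau_k\cdot\btau_k-\sum_h(\btau_k)_{h'}(\btau_k)_{h''}$, so the stratum $\btau$ contributes exactly $X(\btau,t^{-1})|_{t=q}$, and summing over $\btau$ reproduces $r(0,q,z)$.

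Next I would evaluate this count. Grouping by the isomorphism class of the $Q$-representation $M=(V_\d,x)$ and using $|\mathcal{O}_M|=|G_\d|/|\mathrm{Aut}(M)|$, it becomes $\sum_{[M]}|\mathcal{N}(\mathrm{End}\,M)|/|\mathrm{Aut}(M)|$, where $\mathcal{N}$ denotes the nilpotent elements of the finite algebra $\mathrm{End}(M)$. Since $|\mathcal{N}(A)|/|A^\times|$ is unchanged on passing to $A/\mathrm{rad}\,A$, this ratio depends only on the semisimple quotient $\prod_X M_{n_X}(\mathbb{F}_{q^{d_X}})$ attached to the Krull--Schmidt decomposition $M=\bigoplus_X X^{\oplus n_X}$, with $d_X=[\,\mathrm{End}(X)/\mathrm{rad}:\mathbb{F}_q]$. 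The count then factors over indecomposables, the multiplicity sum for each $X$ being an Euler ($q$-exponential) series, so that
\begin{equation*}
r(0,q,z)=\prod_{X\ \text{indec}/\mathbb{F}_q}\ \prod_{j\geq1}\frac{1}{1-q^{-d_X j}\,z^{\dim X}} .
\end{equation*}
Finally I would pass to $A$-polynomials: an indecomposable $X$ with $d_X=m$ and $\dim X=\delta$ is the descent of a Frobenius orbit of size $m$ of an absolutely indecomposable of dimension $\delta/m$, so their number is $\tfrac1m\sum_{e\mid m}\mu(m/e)A_{\delta/m}(q^e)$, with $\mu$ the Möbius function and $A_\d(q^e)=\#\{\text{abs.\ indec.\ over }\mathbb{F}_{q^e}\}$ by Kac. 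Comparing with the product expansion $\mathrm{Exp}\big(\tfrac1{q-1}\sum_\d A_\d(q)z^\d\big)=\prod_{\d,\,j\geq1,\,n}(1-q^{n-j}z^\d)^{-a_{\d,n}}$ gives the theorem.

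The hard part will be this last bookkeeping: one must check that the Frobenius-orbit/Möbius reorganization of the $\mathbb{F}_q$-indecomposables matches precisely the Adams-operator ($\psi_k$) structure built into $\mathrm{Exp}$. Concretely I would compare the two products by taking ordinary logarithms and matching coefficients of each $z^\gamma$ as rational functions of $q$; the sum over pairs $(\d,m)$ with $m\d=\gamma$ together with the divisor sum $\sum_{e\mid m}\mu(m/e)A_\d(q^e)$ collapses, through $\sum_{j\geq1}q^{-mj}=(q^m-1)^{-1}$, onto the single-orbit contribution of the $\lambda$-ring exponential. A pitfall to avoid: the naive identity ``$\mathrm{Log}(r)=\sum_{\delta,m,j}i_{\delta,m}\,q^{-mj}z^\delta$'' is \emph{false}, because the exponents $i_{\delta,m}$ themselves depend on $q$ and are therefore moved by $\psi_k$; this is exactly why the descent must be carried out before, not after, applying $\mathrm{Log}$. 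The whole argument takes Kac's theorem as its sole arithmetic input, so it reproves Hua's formula rather than circumventing it.
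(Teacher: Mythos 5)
The paper contains no proof of this statement: it is Hua's theorem, imported verbatim from \cite{Hua} and used as a black box in the proof of Theorem~\ref{T:main}. So there is no internal argument to compare yours against; what you have written is a reconstruction of a proof of Hua's formula itself, and it is essentially correct, following the same pillars as Hua's original derivation (Krull--Schmidt, Wedderburn, Galois descent, M\"obius inversion) in a streamlined packaging where the multipartition sum is read directly as a stacky count of pairs. In detail: (i) your identification of $r(0,q,z)$ with the count of pairs $(x,N)$, $N$ nilpotent in $\mathfrak{g}_{\d}$ and commuting with $x$, weighted by $1/|G_{\d}(\fq)|$, is right --- with the convention that $\btau_k$ records $\dim(\ker N^k/\ker N^{k-1})$ (the transpose of the Jordan type), the centralizer formula and $\dim \Hom_{\fq[T]}(M_{h'},M_{h''})=\sum_k (\btau_k)_{h'}(\btau_k)_{h''}$ assemble exactly into $X(\btau,q^{-1})$; (ii) the reduction modulo the radical and the factorization over indecomposables are correct, but you should name the numerical input used silently there, namely the Fine--Herstein count $|\mathcal{N}(M_n(\mathbb{F}_Q))|=Q^{n^2-n}$, which is what turns each multiplicity sum into Euler's $q$-exponential and yields $\prod_X\prod_{j\geq 1}(1-q^{-d_Xj}z^{\dim X})^{-1}$; (iii) the descent step --- indecomposables over $\fq$ with $d_X=m$ biject with size-$m$ Frobenius orbits of absolutely indecomposables --- is the one substantive ingredient you cite rather than prove (it needs Wedderburn's little theorem and the vanishing of descent obstructions over finite fields, both in \cite{Kac}, \cite{Hua}), and your final M\"obius collapse does work: on both sides the coefficient of $z^{\beta}$ in the logarithm equals $\sum_{n}\frac{1}{n}\frac{A_{\beta/n}(q^n)}{q^n-1}$, the sum running over $n$ with $\beta/n\in\N^I$, via $\sum_{m|n}\sum_{e|m}\mu(m/e)A_{\beta/n}(q^e)=A_{\beta/n}(q^n)$ and $\sum_{j\geq 1}q^{-nj}=(q^n-1)^{-1}$. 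The preliminary reduction of the identity in $\mathbf{L}$ to its specializations at $t=q$ is also valid, since the Fourier modes of both sides are rational functions of $t$; and your closing caveat --- that descent must happen before taking the logarithm because the exponents $i_{\delta,m}$ depend on $q$ and are therefore not respected by the Adams operations --- is precisely the point where a careless version of this argument would fail.
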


\vspace{.2in}

\paragraph{\textbf{2.3.}} We now consider the quiver varieties $\mathcal{L}(\v,\w)/\fq$ and $\mathcal{M}(\v,\w)/\fq$
over finite fields. We say that a variety $X$ defined over a finite field $\fq$ is
strictly polynomial count if there exists a polynomial $P(t) \in \Q[t]$ such that for
any $r \geq 1$ we have $|X(\mathbb{F}_{q^r})|=P(q^r)$.

\vspace{.1in}

\begin{prop}\label{P:ply} The variety $\mathcal{L}(\v,\w)/\fq$ is pure and strictly polynomial count over any finite field
$\fq$. It has no odd (\'etale) cohomology.  Its counting polynomial is equal to the Poincar\'e polynomial $P_c(\mathcal{L}(\v,\w)/\C,t)$.
\end{prop}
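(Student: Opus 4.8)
The plan is to realize $\mathcal{L}(\v,\w)$ as an iterated affine fibration over a proper base, uniformly over a localization $R=\mathbb{Z}[1/N]$ of $\mathbb{Z}$, and to read off all four assertions from such a paving. The characteristic-zero facts (purity, vanishing odd cohomology, the Poincar\'e polynomial) are already at our disposal: by Hausel's theorem $\mathcal{M}(\v,\w)/\C$ is pure with no odd cohomology, and since $\mathcal{L}(\v,\w)/\C \hookrightarrow \mathcal{M}(\v,\w)/\C$ is a closed immersion inducing a linear isomorphism on cohomology (the two are homotopic), this isomorphism is an isomorphism of mixed Hodge structures, so $H^*(\mathcal{L}(\v,\w)/\C)$ is pure of Tate type and concentrated in even degrees, with $P_c(\mathcal{L}(\v,\w)/\C,t)$ given by Proposition~\ref{E:proof55}. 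The real content is therefore to transport this to $\fqb$ and to match it with a point count.

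First I would spread out. As noted in 2.1, the schemes $\mathcal{M}(\v,\w)$, $\mathcal{L}(\v,\w)=\pi^{-1}(0)$ and the projective morphism $\pi$ are defined over any commutative ring; the GIT construction of the stable quotient, the smoothness of $\mathcal{M}(\v,\w)$ and the properness of $\pi$ all persist after inverting a suitable integer $N$ (the primes dividing the relevant stabilizer orders, or where the symplectic reduction degenerates). Fix such an $R=\mathbb{Z}[1/N]$. Since the candidate counting polynomial $P_c(\mathcal{L}(\v,\w)/\C,t)$ is already fixed and strict polynomial count is a statement about finite fields of characteristic prime to $N$, it suffices to prove $|\mathcal{L}(\v,\w)(\mathbb{F}_{q^r})| = P_c(\mathcal{L}(\v,\w)/\C,q^r)$ for all such $\fq$ and all $r\ge 1$.

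Next comes the geometric input. The scaling action on the cotangent directions, combined with a generic one-parameter subgroup of the framing torus of $\prod_i GL(W_i)$, defines a $\C^*$-action $\rho$ on $\mathcal{M}(\v,\w)$ over $R$ for which $\lim_{t\to 0}\rho(t)\,x$ exists for every $x$, the action on $\mathcal{M}_0(\v,\w)$ contracts it to its vertex, and the fixed locus $\mathcal{M}(\v,\w)^{\rho}$ is proper and contained in $\mathcal{L}(\v,\w)=\pi^{-1}(0)$. Thus $\mathcal{M}(\v,\w)$ is semiprojective, and the Bia\l ynicki--Birula decomposition writes it, over $R$ (after enlarging $N$), as a disjoint union $\bigsqcup_\alpha \mathcal{M}^{+}_\alpha$ of locally closed attracting sets, each a Zariski-locally trivial affine fibration over a fixed component $F_\alpha$. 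Its core $\mathcal{L}(\v,\w)$ decomposes compatibly as $\mathcal{L}(\v,\w)=\bigsqcup_\alpha \mathcal{L}_\alpha$, where $\mathcal{L}_\alpha=\{x\in\mathcal{M}^{+}_\alpha:\lim_{t\to\infty}\rho(t)\,x\text{ exists}\}$ is an affine fibration over $F_\alpha$ of some rank $e_\alpha$. Each $F_\alpha$ is, by the standard analysis of torus-fixed points in quiver varieties, a product of Nakajima quiver varieties attached to strictly smaller dimension vectors, so one may induct on $|\v|$. Granting this, all assertions follow at once. Over $\fqb$ the same decomposition gives $H^{\mathrm{odd}}_{\mathrm{\acute et}}(\mathcal{L}(\v,\w)/\fqb)=0$ and purity of Tate type (Frobenius acts by $q^i$ on $H^{2i}$), with $\dim H^{2i}_{\mathrm{\acute et}}(\mathcal{L}(\v,\w)/\fqb)=\dim H^{2i}(\mathcal{L}(\v,\w)/\C)$ by the inductive comparison of cell combinatorics; and since point counting is additive over locally closed strata, the Grothendieck--Lefschetz trace formula together with the inductive hypothesis $|F_\alpha(\mathbb{F}_{q^r})|=P_c(F_\alpha/\C,q^r)$ yields
\[
|\mathcal{L}(\v,\w)(\mathbb{F}_{q^r})| \;=\; \sum_\alpha |F_\alpha(\mathbb{F}_{q^r})|\,(q^r)^{e_\alpha} \;=\; \sum_\alpha P_c(F_\alpha/\C,q^r)\,(q^r)^{e_\alpha} \;=\; P_c(\mathcal{L}(\v,\w)/\C,q^r),
\]
the last equality being the (compactly supported) Thom-isomorphism identity $P_c(\mathcal{L}(\v,\w)/\C,t)=\sum_\alpha t^{e_\alpha}P_c(F_\alpha/\C,t)$ read on the $\C$-side. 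This is precisely strict polynomial count with counting polynomial $P_c(\mathcal{L}(\v,\w)/\C,t)$.

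The main obstacle is the third step: one must verify that the $\rho$-fixed locus remains proper and that the attracting sets (and their cores $\mathcal{L}_\alpha$) remain Zariski-locally trivial affine fibrations of the expected ranks after reduction modulo every prime not dividing $N$, so that each cell contributes exactly $(q^r)^{e_\alpha}$ and so that the cell dimensions and multiplicities agree with the characteristic-zero picture. The characteristic-zero statements are standard, so the genuine work is propagating them to $R$ by a spreading-out argument and checking that $\mathcal{L}_\alpha$ is cut out of $\mathcal{M}^{+}_\alpha$ by $\rho$-semi-invariant equations, hence is itself an affine fibration, together with confirming that the fixed components $F_\alpha$ are again quiver varieties so that the induction closes.
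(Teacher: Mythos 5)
Your geometric skeleton---a contracting $\mathbb{G}_m$-action, a Bialynicki--Birula paving of $\mathcal{M}(\v,\w)$ and of its core $\mathcal{L}(\v,\w)$ by affine bundles over smooth proper fixed components $F_\alpha$, and cell-by-cell point counting---is exactly the paper's. The genuine gap is the step you flag at the end and then take for granted: the claim that the $F_\alpha$ are ``products of Nakajima quiver varieties attached to strictly smaller dimension vectors,'' so that an induction on $|\v|$ closes. This is false in general. First, each $F_\alpha$ is closed in the projective variety $\mathcal{L}(\v,\w)$, hence proper, whereas a quiver variety $\mathcal{M}(\v',\w')$ is proper only if it is finite (it maps projectively to the affine variety $\mathcal{M}_0(\v',\w')$, so properness forces the image to be finite, and then $\mathcal{M}(\v',\w')=\mathcal{L}(\v',\w')$ forces dimension zero since the core is Lagrangian); so your claim would make every fixed locus finite. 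That already fails for the affine $A_1$ quiver with $\v=(1,1)$, $\w=(1,0)$: there $\mathcal{M}(\v,\w)$ is the ALE resolution of $\C^2/\Z_2$, the scaling action fixes the exceptional $\PP^1$ pointwise, and the framing torus (being scalar) acts trivially, absorbed into $G_{\v}$. Second, mixing in a generic framing cocharacter does not repair this: by Nakajima's fixed-point lemma the components of the combined action are products of \emph{scaling}-fixed loci of quiver varieties with coordinate framing vectors, and since the framing torus is trivial on those, the base case of your induction is precisely the unidentified object you started with. Identifying such fixed components is, even for $ADE$ quivers, the hard content of Nakajima's theory of graded quiver varieties in \cite{NakAnn}, and no such identification exists for a general quiver. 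Consequently your inductive hypothesis $|F_\alpha(\mathbb{F}_{q^r})|=P_c(F_\alpha/\C,q^r)$ is an unproved assertion of exactly the same nature as the proposition itself.

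The idea you are missing---and the way the paper closes this hole---is to never identify the $F_\alpha$ at all, but to transfer arithmetic information from the ambient variety down to them through the Bialynicki--Birula filtration. The paper quotes three external inputs: $\mathcal{M}(\v,\w)/\fq$ is strictly polynomial count (\cite[Prop. 2.2.1]{CBVdB}), it is pure (\cite[Prop. 6.2]{Mozgovoy}), and the fixed components over $\C$ have no odd cohomology (\cite[Lemma 5.2]{NakAnn}). By \cite[Lemma A.1]{CBVdB}, the first two force every Frobenius eigenvalue on $H^{2i}(\mathcal{M}(\v,\w)/\fqb,\qlb)$ to equal $q^i$; the BB decomposition filters this cohomology with factors $H^{2(i-u_\rho)}(\mathcal{M}_\rho/\fqb,\qlb)\{u_\rho\}$, so every Frobenius eigenvalue on $H^{2i}(\mathcal{M}_\rho/\fqb,\qlb)$ equals $q^i$ as well; a smooth proper variety with pure Tate cohomology is polynomial count with counting polynomial equal to its Poincar\'e polynomial, and reassembling $\mathcal{L}(\v,\w)=\bigsqcup_\rho V_\rho$ then yields all the assertions. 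One further discrepancy: you spread out over $\Z[1/N]$, which proves the statement only in characteristics prime to $N$, whereas the proposition claims every finite field; the paper avoids losing primes by invoking Hesselink's version of the BB theorem \cite{Hesselink}, valid over an arbitrary commutative ring, in particular over $\Z$.
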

\begin{proof} A similar statement is proved by Nakajima in \cite[Sec. 5--8]{NakAnn} for virtual Hodge polynomials, for
$ADE$ quivers. Our method is an adaptation of his.

Consider the following $\mathbb{G}_m$-action on $\mathcal{M}(\v,\w)$~:
$$t \cdot (\underline{x},\underline{p},\underline{q})=(t\underline{x}, t\underline{p}, t\underline{q}).$$
There is a compatible action of $\mathbb{G}_m$ on the affine quotient $\mathcal{M}_0(\v,\w)$.

\vspace{.05in}

\begin{lem} Let $R$ be any commutative ring. The above $\mathbb{G}_m$-action induces Bialynicki-Birula type decompositions
\begin{equation}\label{E:BB1}
\mathcal{M}(\v,\w)/R = \bigsqcup_{\rho} U_{\rho}/R, \qquad \mathcal{L}(\v,\w)/R = \bigsqcup_{\rho} V_{\rho}/R
\end{equation}
where $U_{\rho}/R$ and $V_{\rho}/R$ are affine fiber bundles over a smooth projective variety $\mathcal{M}_{\rho}/R$. In addition, we have
\begin{equation}\label{E:BB2}
\mathcal{M}_{\rho}/\C \simeq (\mathcal{M}_{\rho}/\Z) \otimes \C, \qquad \mathcal{M}_{\rho}/\fq \simeq (\mathcal{M}_{\rho}/\Z) \otimes \fq.
\end{equation}
\end{lem}
\begin{proof}
Since the $\mathbb{G}_m$-action on $\mathcal{M}_0(\v,\w)/R$ contracts everything to $0$ and since $\pi^{-1}(0)=\mathcal{L}(\v,\w)/R$ is projective the $\mathbb{G}_m$-action on $\mathcal{M}(\v,\w)$ is contracting as well.
From the definition of $\mathcal{M}(\v,\w)$ as a G.I.T quotient (see e.g. \cite[(3.6)]{Nakajima}) we see that $\mathcal{M}(\v,\w)/R$, and hence also $\mathcal{L}(\v,\w)/R$ may be covered by $\mathbb{G}_m$-invariant affine subschemes. We
may thus apply Hesselink's version of the Bialynicki-Birula theorem \cite[Thm. 5.8]{Hesselink} to deduce that the fixed point scheme $\mathcal{M}(\v,\w)^{\mathbb{G}_m}/R$ is regular and that there is a decomposition $\mathcal{M}(\v,\w)/R=\bigsqcup_{\rho} U_{\rho}/R$ as in (\ref{E:BB1}).
Notice that $\mathcal{M}(\v,\w)^{\mathbb{G}_m}/R \subset \mathcal{L}(\v,\w)/R$ hence each connected component 
$\mathcal{L}_{\rho}/R$ of $\mathcal{M}(\v,\w)^{\mathbb{G}_m}/R$ is projective. Applying Hesselink's theorem
to the inverse $\mathbb{G}_m$-action $t \star (\underline{x}, \underline{p}, \underline{q})=t^{-1} \cdot  (\underline{x}, \underline{p}, \underline{q})$ yields the decomposition in (\ref{E:BB1}) for $\mathcal{L}(\v,\w)/R$. Observe that the $\star$-action of $\mathbb{G}_m$ on $\mathcal{M}_0(\v,\w)/R$ is dilating hence the attracting scheme (called concentrator scheme in \cite{Hesselink}) is equal to $\pi^{-1}(0)=\mathcal{L}(\v,\w)/R$. The assertions
in (\ref{E:BB2}) come from the fact that taking $\mathbb{G}_m$-invariants commutes with base change.
\end{proof}

\vspace{.1in}


We may now finish the proof of Proposition~\ref{P:ply}. Consider the decomposition (\ref{E:BB1}) for $R=\C$. By \cite[Lemma~5.2]{NakAnn}, the varieties $\mathcal{M}_\rho/\C$ have no odd cohomology.  By the comparison theorem for smooth proper varieties, the Poincar\'e polynomial $P_c(\mathcal{M}_\rho/\C,\C)$ and the Poincar\'e polynomial in \'etale cohomology $P_c(\mathcal{M}_\rho/\fqb,\overline{\Q_l})$ coincide. In particular, the odd \'etale
cohomology of $\mathcal{M}_\rho/\fqb$ is zero as well.
Now it is known that $\mathcal{M}(\v,\w)/\fq$ is pure (see \cite[Prop. 6.2]{Mozgovoy}) and strictly polynomial count (see \cite[Prop. 2.2.1]{CBVdB}). By  e.g. \cite[Lemma A.1]{CBVdB} we deduce that the odd (\'etale) cohomology with compact support of $\mathcal{M}(\v,\w)/\fqb$ vanishes and that the Frobenius eigenvalues in 
$H^{2i}(\mathcal{M}(\v,\w)/\fqb, \qlb)$ are all equal to $q^i$.  By (\ref{E:BB1}) there is a filtration of $H^{2i}(\mathcal{M}(\v,\w)/\fqb , \overline{\Q_l})$ whose factors are of the form $H^{2(i-u_{\rho})}(\mathcal{M}_\rho/\fqb, \overline{\Q_l})\{u_{\rho}\}$, where $\{ n\}$ denotes a Tate twist and $u_{\rho}$ is the rank of the affine fiber
bundle $U_{\rho}$.
But then all the Frobenius eigenvalues in $H^{2i}(\mathcal{M}_\rho/\fqb,\qlb)$ are equal to $q^i$ for all $\rho$ and all
$i$.
 Hence each $\mathcal{M}_\rho/\fq$ is itself polynomial count, and therefore so is $\mathcal{L}(\v,\w)/\fq$. The statements concerning the purity and the counting polynomial of $\mathcal{L}(\v,\w)/\fq$ follow from the decomposition
of the \'etale cohomology of $\mathcal{L}(\v,\w)$ in terms of that of the $\mathcal{M}_\rho$. 
\end{proof}

\vspace{.1in}

\begin{cor}\label{P:proof} For any finite field $\fq$ the following relation
holds~:
\begin{equation}\label{E:proof6}
\sum_{\v}
q^{-d(\v,\w)}|\mathcal{L}(\v,\w)(\fq)|z^{\v}=\frac{r(\v,q^{-1},z)}{r(0,q^{-1},z)
}.
\end{equation}
\end{cor}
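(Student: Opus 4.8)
The plan is to transport the complex Betti-number identity (\ref{E:proof55}) to a point count over $\fq$ using Proposition~\ref{P:ply} as the bridge; the corollary is really just the assembly of these two already-established facts.

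First I would invoke Proposition~\ref{P:ply}, which asserts that $\mathcal{L}(\v,\w)/\fq$ is pure, has vanishing odd \'etale cohomology, and is strictly polynomial count with counting polynomial equal to $P_c(\mathcal{L}(\v,\w)/\C,t)$. Purity together with the vanishing of odd cohomology forces Frobenius to act on $H^{2i}_c(\mathcal{L}(\v,\w)/\fqb,\qlb)$ by the scalar $q^i$, so the Grothendieck--Lefschetz trace formula gives
$$|\mathcal{L}(\v,\w)(\fq)|=\sum_i \dim H^{2i}_c(\mathcal{L}(\v,\w)/\fqb,\qlb)\,q^i=P_c(\mathcal{L}(\v,\w)/\C,q),$$
where the final equality is exactly the identification of the counting polynomial with the complex Poincar\'e polynomial provided by Proposition~\ref{P:ply}.

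Next I would substitute this evaluation into the left-hand side of (\ref{E:proof6}), rewriting it as
$$\sum_{\v} q^{-d(\v,\w)}P_c(\mathcal{L}(\v,\w)/\C,q)\,z^{\v}.$$
This is precisely the left-hand side of (\ref{E:proof55}) specialized at $t=q$, so applying that proposition yields the right-hand side of (\ref{E:proof6}) and completes the proof.

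Because every step is a direct specialization, there is no genuine obstacle here: the substance lies entirely in Proposition~\ref{P:ply} and in the complex computation (\ref{E:proof55}), both of which have already been carried out. The only point needing minor care is the normalization, namely that the counting polynomial must be evaluated at $t=q$ (rather than at $q^{-1}$), which is dictated by the convention $P_c(X,t)=\sum_i \dim H^{2i}_c(X)\,t^i$ in the definition of $P_c$; once this is pinned down the two displayed left-hand sides match term by term.
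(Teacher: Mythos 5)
Your proof is correct and is essentially the paper's own (implicit) argument: the corollary is stated without proof precisely because it is the immediate combination of Proposition~\ref{P:ply} (counting polynomial $=$ $P_c(\mathcal{L}(\v,\w)/\C,t)$) with the specialization of (\ref{E:proof55}) at $t=q$. One small remark: your intermediate claim that purity plus vanishing odd cohomology alone forces the Frobenius eigenvalues to equal $q^i$ is not quite right as stated (purity only bounds their absolute values; one also needs the polynomial-count property, as in the argument of Proposition~\ref{P:ply}), but this step is redundant anyway since the proposition directly asserts the identification $|\mathcal{L}(\v,\w)(\fq)|=P_c(\mathcal{L}(\v,\w)/\C,q)$ that you actually use.
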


\vspace{.2in}

\paragraph{\textbf{2.4.}} We next relate the number of points of $\Lambda_{\v}$
and $\mathcal{L}(\v,\w)$
over finite fields.
Define a stratification $\mathcal{L}(\v,\w)=\bigsqcup_{\w' \leq \w}
\mathcal{L}(\v,\w)_{\w'}$ by
$$\mathcal{L}(\v,\w)_{\w'}=\{ G_{\v} \cdot ( \underline{x}, 0, \underline{q})\in
\mathcal{L}(\v,\w)\;|\; \text{dim}(Im(\bigoplus_{i}q_i))=\w'\}.$$
Observe that the map $(\underline{x}, 0, \underline{q}) \mapsto
(\underline{x}')$ with $x'_{h}=q_{h''} x_{h} q_{h'}^{-1}$ defines an isomorphism
$\mathcal{L}(\v,\v)_{\v} \simeq \Lambda_\v$.
Let $Gr_{\w'}^{\w}$ denote the Grassmannian of $I$-graded subspaces of $W$ of
dimension $\w'$.
The projection $\mathcal{L}(\v,\w)_{\w'} \to Gr^{\w}_{\w'}$ is a fibration with
fiber $\mathcal{L}(\v,\w')_{\w'}$.
It follows that
\begin{equation}\label{E:proof1}
|\mathcal{L}(\v,\w)(\mathbb{F}_q)|=\sum_{\w' \leq \w} |Gr_{\w'}^{\w}(\fq)| \cdot
|\mathcal{L}(\v,\w')_{\w'}(\fq)|.
\end{equation}
Inverting (\ref{E:proof1}) to express the number of points of
$\mathcal{L}(\v,\w)_{\w}/\fq$ for all $\w$ in terms
of the number of points of $\mathcal{L}(\v,\w)/\fq$ for all $\w$ yields, after a
small computation, the following ~:

\vspace{.1in}

\begin{lem}\label{L:proof1} We have
\begin{equation}\label{E:proof2}
|\mathcal{L}(\v,\w)_{\w}(\fq)|=\sum_{\w' \leq \w} (-1)^{|\w|-|\w'|}
q^{u(\w,\w')}|Gr_{\w'}^{\w}(\fq)|\cdot |\mathcal{L}(\v,\w')(\fq)|,
\end{equation}
where $u(\w,\w')=\sum_i (w_i-w'_i)(w_i-w'_i-1)/2$.
\end{lem}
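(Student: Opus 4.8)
The plan is to treat (\ref{E:proof1}) as a triangular linear system indexed by the poset $(\N^I,\le)$ and to invert it by a $q$-analogue of M\"obius inversion. Writing $a_\w=|\mathcal{L}(\v,\w)(\fq)|$, $b_\w=|\mathcal{L}(\v,\w)_\w(\fq)|$ and $g^\w_{\w'}=|Gr^\w_{\w'}(\fq)|$, relation (\ref{E:proof1}) reads
\begin{equation*}
a_\w=\sum_{\w'\le\w} g^\w_{\w'}\,b_{\w'},
\end{equation*}
a system whose coefficient matrix $(g^\w_{\w'})$ is unitriangular (since $g^\w_\w=1$) and hence invertible over $\Q$. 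The task is to show that its inverse has entries $(-1)^{|\w|-|\w'|}q^{u(\w,\w')}g^\w_{\w'}$, for then applying this inverse to the above system yields precisely (\ref{E:proof2}).

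First I would record that, because an $I$-graded subspace is the direct sum of its graded pieces, the point count of the graded Grassmannian factors into a product of ordinary Gaussian binomial coefficients,
\begin{equation*}
g^\w_{\w'}=\prod_{i\in I}\binom{w_i}{w'_i}_q,\qquad \binom{m}{k}_q=\frac{(q^m-1)\cdots(q^{m-k+1}-1)}{(q^k-1)\cdots(q-1)}.
\end{equation*}
The proposed inverse entries factor compatibly, since $u(\w,\w')=\sum_i\binom{w_i-w'_i}{2}$ and $(-1)^{|\w|-|\w'|}=\prod_i(-1)^{w_i-w'_i}$. Thus the whole multi-index inversion is the product over $i\in I$ of one-variable inversions, and I am reduced to a single-variable statement.

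The heart of the matter is the Gaussian binomial orthogonality relation
\begin{equation*}
\sum_{k=m}^{n}(-1)^{n-k}q^{\binom{n-k}{2}}\binom{n}{k}_q\binom{k}{m}_q=\delta_{n,m},\qquad 0\le m\le n,
\end{equation*}
the $q$-deformation of the classical alternating binomial identity. I would establish it either by induction on $n$ via the Pascal recursion $\binom{n}{k}_q=\binom{n-1}{k-1}_q+q^k\binom{n-1}{k}_q$, or by extracting coefficients from the $q$-binomial theorem $\prod_{j=0}^{n-1}(1+q^jx)=\sum_k q^{\binom{k}{2}}\binom{n}{k}_q x^k$. Granting this identity in each coordinate $i$ and multiplying the $|I|$ copies together gives
\begin{equation*}
\sum_{\w''\,:\,\w'\le\w''\le\w}(-1)^{|\w|-|\w''|}q^{u(\w,\w'')}g^\w_{\w''}\,g^{\w''}_{\w'}=\delta_{\w,\w'},
\end{equation*}
which is exactly the assertion that $\big((-1)^{|\w|-|\w'|}q^{u(\w,\w')}g^\w_{\w'}\big)$ inverts $(g^\w_{\w'})$, completing the proof.

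As for difficulty, there is no genuine obstacle here, only bookkeeping: the inversion is formal once the triangular structure is in place. The single point requiring care — presumably what the text means by ``a small computation'' — is pinning down the exponent of $q$ and the sign in the inverse matrix exactly, i.e. verifying the orthogonality identity above rather than an off-by-one variant. The factorization over $I$ then renders the passage from one variable to the full dimension vector automatic.
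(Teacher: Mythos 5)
Your proof is correct and takes essentially the same route as the paper, which gives no details beyond saying that (\ref{E:proof1}) is inverted ``after a small computation''; your $q$-M\"obius inversion --- reducing coordinate-wise via $|Gr^{\w}_{\w'}(\fq)|=\prod_i\binom{w_i}{w'_i}_q$ and then invoking the Gaussian-binomial orthogonality $\sum_{j=0}^{N}(-1)^j q^{\binom{j}{2}}\binom{N}{j}_q=\delta_{N,0}$ --- is precisely that computation made explicit. The sign and the exponent $u(\w,\w')=\sum_i\binom{w_i-w'_i}{2}$ come out exactly as stated in the lemma, so nothing is missing.
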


\vspace{.1in}

\begin{cor}\label{C:proof2}
\begin{equation}\label{E:proof2.5}
|\Lambda_\v(\fq)|=\sum_{\w' \leq \v} (-1)^{|\v|-|\w'|}
q^{u(\v,\w')}|Gr_{\w'}^{\v}(\fq)|\cdot |\mathcal{L}(\v,\w')(\fq)|.
\end{equation}
\end{cor}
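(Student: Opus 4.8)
The plan is to obtain \eqref{E:proof2.5} as the special case $\w=\v$ of Lemma~\ref{L:proof1}, using the isomorphism $\mathcal{L}(\v,\v)_{\v} \simeq \Lambda_\v$ already recorded in the text. First I would observe that the quantity $u(\w,\w')=\sum_i(w_i-w'_i)(w_i-w'_i-1)/2$ appearing in \eqref{E:proof2} is of the correct shape to specialize: setting $\w=\v$ gives exactly the exponent $u(\v,\w')$ in \eqref{E:proof2.5}, so no new combinatorial input is needed once the lemma is in hand. The core of the argument is therefore just to read off \eqref{E:proof2} at $\w=\v$.

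Concretely, I would specialize \eqref{E:proof2} by substituting $\w=\v$ throughout. The left-hand side becomes $|\mathcal{L}(\v,\v)_{\v}(\fq)|$, which by the displayed isomorphism $\mathcal{L}(\v,\v)_{\v} \simeq \Lambda_\v$ equals $|\Lambda_\v(\fq)|$; since counting points over $\fq$ is invariant under isomorphism of $\fq$-varieties, this identification is immediate. On the right-hand side, the sum now runs over $\w' \leq \v$, the sign is $(-1)^{|\v|-|\w'|}$, the power of $q$ is $q^{u(\v,\w')}$, the Grassmannian factor is $|Gr^{\v}_{\w'}(\fq)|$, and the final factor is $|\mathcal{L}(\v,\w')(\fq)|$. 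Matching these term by term against \eqref{E:proof2.5} completes the identification.

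There is essentially no obstacle here, since the corollary is a direct specialization of the preceding lemma; the only point requiring a word of care is that the isomorphism $\mathcal{L}(\v,\v)_{\v}\simeq\Lambda_\v$ is defined over $\fq$ (it is given by the explicit formula $x'_h=q_{h''}x_hq_{h'}^{-1}$, which makes sense over any field and is visibly defined over the prime field), so that it indeed induces an equality of point counts over $\fq$. Granting that, the substitution $\w=\v$ yields \eqref{E:proof2.5} at once.
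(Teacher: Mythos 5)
Your proof is correct and is exactly the paper's (implicit) argument: the corollary is stated without proof precisely because it is the specialization $\w=\v$ of Lemma~\ref{L:proof1}, combined with the isomorphism $\mathcal{L}(\v,\v)_{\v}\simeq\Lambda_\v$ recorded just before that lemma. Your additional remark that this isomorphism is defined over $\fq$ (hence preserves point counts) is a sensible precision the paper leaves tacit.
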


\vspace{.2in}

\paragraph{\textbf{2.5.}} We may now proceed to the proof of Theorem~\ref{T:main}.
It is essentially a direct computation using (\ref{E:proof2.5}) together with
(\ref{E:proof6}). For this, we consider the series
\begin{equation}\label{E:proof11}
T_{\w}(z)=\sum_{\v}
\frac{|\mathcal{L}(\v,\w)_{\w}(\fq)|}{|G_{\w}(\fq)|}q^{\langle
\v,\v\rangle}z^{\v}\in \frac{|\Lambda_{\w}(\fq)|}{|G_{\w}(\fq)|}q^{\langle
\w,\w\rangle}z^{\w} + \bigoplus_{\v > \w} \C z^{\v}.
\end{equation}
Using Lemma~\ref{L:proof1} and Corollary~\ref{P:proof} we have
\begin{equation*}
\begin{split}
T_{\w}(z)&=\sum_{\w' \leq \w}\bigg\{
(-1)^{|\w|-|\w'|}\frac{|Gr^\w_{\w'}(\fq)|}{|G_{\w}(\fq)|} q^{u(\w,\w')}
\sum_{\v} |\mathcal{L}(\v,\w')(\fq)|q^{\langle \v,\v\rangle}z^{\v}\bigg\}\\
&=\sum_{\w' \leq \w} (-1)^{|\w|-|\w'|}\frac{|Gr^\w_{\w'}(\fq)|}{|G_{\w}(\fq)|}
q^{u(\w,\w')} \frac{r(\w',q^{-1},q^{\w'}z)}{r(0,q^{-1},q^{\w'}z)},
\end{split}
\end{equation*}
where by convention $(q^{\mathbf{x}}z)^{\mathbf{y}}=q^{\mathbf{x} \cdot
\mathbf{y}}z^{\mathbf{y}}$.
Expanding in powers of $z$, we have
\begin{equation*}
\begin{split}
&\frac{r(\w',q^{-1}, q^{\w'}z)}{r(0,q^{-1},q^{\w'}z)}=\\
&\qquad =\bigg\{ \sum_{\btau}q^{\w'\cdot |\btau_{>1}|} z^{|\btau|}
X(\btau,q)\bigg\}\cdot \bigg\{\sum_{l \geq 0} (-1)^l
\sum_{\substack{\btau^{(1)}, \ldots, \btau^{(l)}\\ \btau^{(j)}\neq 0}}
q^{\w'\cdot( \sum_j |\btau^{(j)}|)} z^{\sum_j|\btau^{(j)}|}\prod_j
X(\btau^{(j)},q)\bigg\}
\end{split}
\end{equation*}
It follows that
\begin{equation}\label{E:proof10}
T_{\w}(z)=\frac{1}{|G_{\w}(\fq)|} \sum_{l \geq 0}(-1)^l 
\sum_{\substack{\btau^{(1)}, \ldots, \btau^{(l)}\\ \btau^{(j)} \neq
0}}K_{\w}^{(l)}( \btau^{(1)}, \ldots, \btau^{(l)}) \cdot
z^{\sum|\btau^{(j)}|}\prod_{j} X(\btau^{(j)},q)
\end{equation}
where
\begin{equation*}
K_{\w}^{(l)}(\btau^{(1)}, \ldots, \btau^{(l)})=
 \sum_{\w' \leq \w} (-1)^{|\w|-|\w'|} q^{u(\w,\w')} |Gr^\w_{\w'}(\fq)| \big(
q^{|\w'|\cdot |\btau^{(1)}_{>1}|}-q^{|\w'|\cdot|\btau^{(1)}|}\big)q^{\w'\cdot
(\sum_{j >1} |\btau^{(j)}|)}
\end{equation*}
Replacing $\w'$ by $\w-\w'$ and setting $\boldsymbol{1}=(1, \ldots, 1) \in\N^I$ we may rewrite $K_{\w}^{(l)}(\btau^{(1)}, \ldots,
\btau^{(l)})$ as 
\begin{equation*}
\begin{split}
K_{\w}^{(l)}&(\btau^{(1)}, \ldots, \btau^{(l)})=\\
&= q^{|\w| \cdot (|\btau^{(1)}_{>1}| + \sum_{j>1} |\btau^{(j)}|)}\bigg(\sum_{\w'
\leq \w} (-1)^{|\w'|} q^{\frac{1}{2}\w' \cdot(\w'-\boldsymbol{1})} |Gr^\w_{\w'}(\fq)| 
q^{-|\w'|\cdot (|\btau^{(1)}_{>1}|+\sum_{j>1} |\btau^{(j)}|)}\bigg) \\
&\qquad \qquad -q^{|\w| \cdot (\sum_{j} |\btau^{(j)}|)}\bigg(\sum_{\w' \leq \w}
(-1)^{|\w'|} q^{\frac{1}{2}\w' \cdot(\w'-\boldsymbol{1})} |Gr^\w_{\w'}(\fq)|  q^{-|\w'|\cdot
(\sum_{j} |\btau^{(j)}|)}\bigg).
\end{split}
\end{equation*}

Now we use the following identity, whose proof is left to the reader. For any $w
\in \mathbb{N}$ we have 
\begin{equation*}
\sum_{j=0}^{w} (-1)^{w'} q^{w'(w'-1)/2-aw'} |Gr_{w'}^{w}(\fq)|=
\begin{cases} 0 & \quad \text{if}\; a=0, 1, 
\ldots d-1 \\
(1-q)^{-1}\cdots (1-q^{-w})^{-1} & \quad \text{if}\; a=d \end{cases}.
\end{equation*}
This implies that $K_{\w}^{(l)}(\btau^{(1)}, \ldots, \btau^{(l)})=0$ unless
$\sum_i |\btau^{(i)}| - \w \in \N^I$, and that 
for $\sum_i |\btau^{(i)}|=\w$ we have
$$K_{\w}^{(l)}(\btau^{(1)}, \ldots, \btau^{(l)})=-|G_{\w}(\fq)|.$$
Comparing the coefficients of $z^{\w}$ in (\ref{E:proof10}) and
(\ref{E:proof11}) we obtain the equality
$$\frac{|\Lambda_{\w}(\fq)|}{|G_{\w}(\fq)|}q^{\langle \w,\w\rangle}=\sum_{l}
(-1)^{l+1}\bigg\{\sum_{\substack{\btau^{(1)}, \ldots, \btau^{(l)}}}\prod_j
X(\btau^{(j)},q)\bigg\}$$
where the sums runs over all tuples $(\btau^{(1)}, \ldots, \btau^{(l)})$ of
nonzero partitions satisfying $\sum_j |\btau^{(j)}|=\w$.
Summing over all $\w$ we finally obtain
$$\sum_{\w}\frac{|\Lambda_{\w}(\fq)|}{|G_{\w}(\fq)|}q^{\langle
\w,\w\rangle}z^{\w}=\frac{1}{1+ \sum_{\btau \neq 0}
X(\btau,q)z^{|\btau|}}=\frac{1}{r(0,q^{-1},z)}.$$
The theorem is now a consequence of Hua's formula
$$r(0,q^{-1},z)=\text{Exp} \bigg(  \frac{1}{q^{-1}-1}\sum_{\d}
A_{\d}(q^{-1})z^{\d}\bigg)=\text{Exp} \bigg(  \frac{1}{q-1}\sum_{\d}
\overline{A}_{\d}(q)q^{\langle \d,\d\rangle}z^{\d}\bigg)^{-1}.$$
\qed

\vspace{.2in}

\section{Factorization of $\lambda_Q(q,z)$ and the strata in $\Lambda_{\d}$.}

\vspace{.2in}

\paragraph{\textbf{3.1.}} Let $Q_J$ be the full subquiver of $Q$ corresponding to a subset of vertices $J \subset I$.
All the varieties associated to $Q_J$ instead of $Q$ will be denoted with a superscript $J$.
If $\d \in \N^J$ then of course $\Lambda^J_{\d} \simeq \Lambda_{\d}$ so that there is a factorization
\begin{equation}\label{E:crystal1}
\lambda_Q(q,z)=\lambda_{Q_J}(q,z) \cdot \lambda_{Q \backslash Q_J}(q,z)
\end{equation}
where by definition
$$\lambda_{Q \backslash Q_J}(q,z)=\text{Exp}\bigg( \frac{1}{q-1} \sum_{\substack{\d\\ supp(\d) \not\subseteq J}} \overline{A}_{\d}(q) q^{\langle \d, \d \rangle}z^{\d}\bigg).$$
The Fourier modes of $\lambda_{Q \backslash Q_J}(q,z)$ count the (orbifold) number of points of some natural subvarieties in the $\Lambda_{\d}$ first considered by
Lusztig. These subvarieties are defined as follows. Let $K=\{k \in H \sqcup H^*\;|\; k' \in I \backslash J, k'' \in J\}$.
For a pair $(\d,\mathbf{n}) \in \N^I \times \N^J$ such that $\d-\mathbf{n} \in \N^I$, we set
$$\Lambda_{\d; \mathbf{n}} =\big\{ (x_h, x_{h^*})_h \in \Lambda_{\d}\;|\; codim (\bigoplus_{k \in K} k)=\mathbf{n}\big\}.$$
Each $\Lambda_{\d;\mathbf{n}}$ is a locally closed subvariety in $\Lambda_{\d}$ and we have a stratification
$$\Lambda_{\d}=\bigsqcup_{\mathbf{n}} \Lambda_{\d;\mathbf{n}}.$$
There is natural map of stacks
$$p_{\d,\mathbf{n}}~:[\Lambda_{\d;\mathbf{n}}/G_{\d}] \to [\Lambda_{\d;0}/G_{\d}] \times [\Lambda^J_{\mathbf{n}}/G_{\mathbf{n}}]$$
given by assigning to a representation $M=(x_h,x_{h^*})_h$ in $\Lambda_{\d;\mathbf{n}}$ the pair $(F, M/F)$ where $F$
is the subrepresentation of $M$ (for the doubled quiver $\overline{Q}$) generated by $\bigoplus_{i \not\in J} V_i$. The following
is proved in \cite[Sect.12]{LusJAMS} and plays a key role in the geometric construction of the crystal graph. The proof
in \textit{loc. cit} is given in the case that $J$ is reduced to a single vertex, but the proof is the same in
general, see e.g. \cite{Bozec}.

\vspace{.1in}

\begin{prop}\label{P:crystal} The map $p_{\d;\mathbf{n}}$ is a (not necessearily representable) affine fibration of dimension $(\d-\mathbf{n},\mathbf{n})$.
\end{prop}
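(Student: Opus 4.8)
The plan is to realise $p_{\d;\mathbf n}$ as the map that forgets the extension datum in a short exact sequence, and to compute its fibres through the homological algebra of the preprojective algebra $\Pi=\Pi_Q$ of $\overline Q$ (the path algebra of $\overline Q$ modulo the relation $\sum_h[x_h,x_{h^*}]$), for which a point of $\Lambda_{\gamma}$ is exactly a nilpotent $\Pi$-module of dimension $\gamma$. First I would check that $p_{\d;\mathbf n}$ is well defined. By construction $F$ is an $\overline Q$-subrepresentation of $M$; since $F$ is invariant under all $x_h,x_{h^*}$, the relation $\sum_h[x_h,x_{h^*}]=0$ restricts to $F$ and descends to $M/F$, and both inherit nilpotency (restrict, resp. project, the flag). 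Thus $F\in\Lambda_{\d-\mathbf n;0}$ (it is generated by $\bigoplus_{i\notin J}V_i$, hence sits in the open $0$-stratum for its own dimension vector $\d-\mathbf n$), while $M/F\in\Lambda^J_{\mathbf n}$ (it is supported on $J$ and every arrow meeting $I\setminus J$ acts as $0$). Passing to $G$-orbits gives the asserted map of quotient stacks.

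Next I would identify the fibres. Choosing an $I$-graded splitting $V=F\oplus C$ with $C\cong M/F$, every $M'\in\Lambda_{\d;\mathbf n}$ with prescribed sub $F$ and quotient $M/F$ is block upper triangular, $x_h=\begin{pmatrix} a_h & \phi_h\\ 0 & c_h\end{pmatrix}$, with $(a_h)$, $(c_h)$ the fixed data of $F$ and $M/F$ and $\phi=(\phi_h)\in C^1:=\bigoplus_{h\in H\sqcup H^*}\Hom\big((M/F)_{h'},F_{h''}\big)$ the off-diagonal extension datum. A direct check shows that the $FF$- and $CC$-blocks of $\mu(M')$ vanish automatically, while the $FC$-block is linear in $\phi$; hence $\mu(M')=0$ is precisely the linear equation $d^1\phi=0$, where $C^0\xrightarrow{d^0}C^1\xrightarrow{d^1}C^2$ is the standard three-term complex with $C^0=C^2=\bigoplus_i\Hom\big((M/F)_i,F_i\big)$ computing $\Ext^\bullet_\Pi(M/F,F)$. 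The residual gauge group $U=\{1+\psi:\psi\in C^0\}$ acts on $\ker d^1$ by $\phi\mapsto\phi+d^0\psi$, and the fibre of $p_{\d;\mathbf n}$ is the quotient stack $[\ker d^1/U]$. This is the content of ``affine fibration'': $\ker d^1$ is an affine space and $U$ a vector group acting by translations, so the fibre is $\Ext^1_\Pi(M/F,F)\times B\Hom_\Pi(M/F,F)$ — affine, and non-representable precisely because $\Hom_\Pi(M/F,F)=\ker d^0$ acts trivially.

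It then remains to evaluate the relative dimension $\dim\ker d^1-\dim U=\dim\Ext^1_\Pi(M/F,F)-\dim\Hom_\Pi(M/F,F)$. The key input is the vanishing $\Hom_\Pi(F,M/F)=0$: a homomorphism $F\to M/F$ kills the generators $\bigoplus_{i\notin J}V_i$ (since $(M/F)_i=0$ for $i\notin J$), hence kills all of $F$. Feeding this into Crawley--Boevey's formula for the ($2$-Calabi--Yau) algebra $\Pi$,
\[
\dim\Hom_\Pi(X,Y)+\dim\Hom_\Pi(Y,X)-\dim\Ext^1_\Pi(X,Y)=(\dim X,\dim Y),
\]
applied to $X=M/F$, $Y=F$, collapses the relative dimension to the symmetric Euler pairing, giving $(\d-\mathbf n,\mathbf n)$ (with the normalisation of the pairing fixed in \S1.1). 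As a consistency check one notes that the strata $\Lambda_{\d;\mathbf n}$ all attain the full dimension $\tfrac12\dim\overline E_\d$, so the same count can be recovered from $\dim[\Lambda_{\d;\mathbf n}/G_\d]$, $\dim[\Lambda_{\d-\mathbf n;0}/G_{\d-\mathbf n}]$ and $\dim[\Lambda^J_{\mathbf n}/G_{\mathbf n}]$ via the identity $\dim[\Lambda_{\gamma}/G_{\gamma}]=-\langle\gamma,\gamma\rangle$.

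Finally, the genuine obstacle is to upgrade this pointwise description to a statement over the whole base stack, i.e. to show that $\ker d^1$ has locally constant rank as $(F,M/F)$ vary. Equivalently, $\dim\Hom_\Pi(M/F,F)$ must be constant on each component of the base; this is the only place where more than linear algebra enters, and I would establish it either by a semicontinuity-plus-flatness argument — exhibiting $p_{\d;\mathbf n}$ as the composite of a Hecke-type correspondence with the parabolic-induction projection and checking flatness directly on the equations $d^1\phi=0$ — or, following Lusztig, by a $\mathbb{G}_m$-equivariant degeneration trivialising the extension. With that in hand the homological dimension count is routine; the constancy of $\Hom_\Pi(M/F,F)$ and the careful bookkeeping of the non-representable gerbe $B\Hom_\Pi(M/F,F)$ are the steps I expect to cost the most.
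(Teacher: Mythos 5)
Your approach cannot be compared to a proof in the paper itself, because the paper gives none: Proposition~\ref{P:crystal} is quoted from Lusztig (Sect.~12 of \cite{LusJAMS}) and from \cite{Bozec}. Measured against that argument, your extension-theoretic setup is essentially Lusztig's linear algebra in homological dress, and its structural part is sound: the map is well defined, the fibre of $p_{\d;\mathbf{n}}$ over $(F,N)$, $N=M/F$, is the quotient $[\ker d^1/C^0]\simeq \Ext^1_\Pi(N,F)\times B\Hom_\Pi(N,F)$, and $\Hom_\Pi(F,N)=0$. The genuine error is the last step. Your own displayed Crawley--Boevey identity, with $X=N$, $Y=F$ and $\Hom_\Pi(F,N)=0$, gives
\begin{equation*}
\dim\Ext^1_\Pi(N,F)-\dim\Hom_\Pi(N,F)\;=\;\dim\Hom_\Pi(F,N)-(\mathbf{n},\d-\mathbf{n})\;=\;-(\d-\mathbf{n},\mathbf{n}),
\end{equation*}
the \emph{negative} of what you assert. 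This is not a harmless slip of convention: for $Q$ of type $A_2$ with arrow $h:1\to 2$, $J=\{2\}$, $\d=\alpha_1+\alpha_2$, $\mathbf{n}=\alpha_2$, one has $F=S_1$, $N=S_2$, the stratum is $\{x_h=0\}\simeq\mathbb{A}^1$, and the fibre of $p_{\d;\mathbf{n}}$ is $\Ext^1_\Pi(S_2,S_1)\simeq\mathbb{A}^1$, of dimension $+1$, whereas $(\d-\mathbf{n},\mathbf{n})=(\alpha_1,\alpha_2)=-1$. In fact $-(\d-\mathbf{n},\mathbf{n})$ is exactly the value the paper itself requires: matching coefficients in (\ref{E:crystal2}) forces the fibre count $q^{f}$ to satisfy $f+\langle\d,\d\rangle=\langle\d-\mathbf{n},\d-\mathbf{n}\rangle+\langle\mathbf{n},\mathbf{n}\rangle$, i.e.\ $f=-(\d-\mathbf{n},\mathbf{n})$. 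So the printed Proposition carries a sign slip, your computation (carried out correctly) proves the corrected statement, and your final line ``giving $(\d-\mathbf{n},\mathbf{n})$'' is a compensating sign error rather than a derivation. The consistency check you propose but do not carry out would have exposed this immediately: purity of the strata gives relative dimension $-\langle\d,\d\rangle+\langle\d-\mathbf{n},\d-\mathbf{n}\rangle+\langle\mathbf{n},\mathbf{n}\rangle=-(\d-\mathbf{n},\mathbf{n})$.

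Your closing paragraph also misidentifies the remaining work. Constancy of $\dim\Hom_\Pi(M/F,F)$ over the base is neither needed nor true: already for the quiver $1\to 2\leftarrow 3$ with $J=\{2\}$ and $\d-\mathbf{n}=(1,1,1)$, this dimension jumps between $0$ and $1$ on $\Lambda_{\d-\mathbf{n};0}$. What is constant is the rank of $\ker d^1$, and that follows from a fact you already established: $\mathrm{coker}(d^1)\simeq\Hom_\Pi(F,M/F)^{*}=0$, so $d^1$ is fibrewise surjective and $\ker d^1$ is a subbundle of the trivial bundle $C^1$, of constant rank $\mathbf{n}\cdot(\d-\mathbf{n})-(\d-\mathbf{n},\mathbf{n})$; quotienting by the translation action of $C^0$ then yields the (non-representable) affine fibration with no semicontinuity, flatness, or $\mathbb{G}_m$-degeneration argument at all. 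Two smaller omissions should be recorded: one must check that every extension of $N$ by $F$ actually lies in $\Lambda_{\d;\mathbf{n}}$ (an extension of nilpotent modules is nilpotent, and the submodule of $M'$ generated by $\bigoplus_{i\notin J}V_i$ coincides with the submodule of $F$ so generated, which is all of $F$ precisely because $F$ lies in its own $0$-stratum); and for $|J|>1$ the stratification must be read as $\dim (M/F)=\mathbf{n}$, as you implicitly do, rather than literally as the codimension of $\sum_{k\in K}\mathrm{Im}(x_k)$, since these differ once $Q_J$ has arrows.
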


Set 
$$\lambda_{Q;0}(q,z)=\sum_{\d} \frac{|\Lambda_{\d;0}(\fq)|}{|G_{\d}(\fq)|} q^{\langle \d,\d\rangle}z^{\d}.$$
Then from Proposition~\ref{P:crystal} we easily deduce the identity
\begin{equation}\label{E:crystal2}
\lambda_Q(q,z)=\lambda_{Q_J}(q,z) \cdot \lambda_{Q;0}(q,z).
\end{equation}

\vspace{.1in}

\begin{cor} The following equality holds
 $$\lambda_{Q;0}(q,z)=\lambda_{Q \backslash Q_J}(q,z).$$
\end{cor}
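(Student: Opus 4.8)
The plan is to combine the two factorization identities already established, namely \eqref{E:crystal1} and \eqref{E:crystal2}, and simply cancel the common factor. Both identities express $\lambda_Q(q,z)$ as a product in which the first factor is exactly $\lambda_{Q_J}(q,z)$; the second factors are $\lambda_{Q\backslash Q_J}(q,z)$ and $\lambda_{Q;0}(q,z)$ respectively. So the result is immediate \emph{provided} the common factor $\lambda_{Q_J}(q,z)$ is invertible in the ring $\Q[[z_i]]_i$, which it is since its constant term is $1$ (the $\d=0$ term contributes $|\Lambda_0(\fq)|/|G_0(\fq)|\,q^0=1$). I would state this cancellation as the one-line core of the argument.

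Concretely, I would first recall that \eqref{E:crystal2} was deduced from Proposition~\ref{P:crystal}: the affine fibration $p_{\d,\mathbf{n}}$ of relative dimension $(\d-\mathbf{n},\mathbf{n})$ gives, over $\fq$,
\begin{equation*}
|\Lambda_{\d;\mathbf{n}}(\fq)| = q^{(\d-\mathbf{n},\mathbf{n})}\,|\Lambda_{\d;0}(\fq)|\cdot|\Lambda^J_{\mathbf{n}}(\fq)|,
\end{equation*}
and summing the stratification $\Lambda_{\d}=\bigsqcup_{\mathbf{n}}\Lambda_{\d;\mathbf{n}}$ against $q^{\langle\d,\d\rangle}z^{\d}/|G_{\d}(\fq)|$, after using $\langle\d,\d\rangle = \langle\mathbf{n},\mathbf{n}\rangle + \langle\d-\mathbf{n},\d-\mathbf{n}\rangle + (\d-\mathbf{n},\mathbf{n})$ (valid because $Q$ has no edge loops and $Q_J$ is a full subquiver, so the Euler form splits accordingly) and $|G_{\d}(\fq)| = |G_{\mathbf{n}}(\fq)|\cdot|G_{\d-\mathbf{n}}(\fq)|$, produces exactly the product $\lambda_{Q_J}(q,z)\cdot\lambda_{Q;0}(q,z)$. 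That is the content of \eqref{E:crystal2}, which I am entitled to assume.

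Granting both \eqref{E:crystal1} and \eqref{E:crystal2}, the proof reads:
\begin{proof}
By \eqref{E:crystal1} and \eqref{E:crystal2} we have
\begin{equation*}
\lambda_{Q_J}(q,z)\cdot\lambda_{Q\backslash Q_J}(q,z)=\lambda_Q(q,z)=\lambda_{Q_J}(q,z)\cdot\lambda_{Q;0}(q,z).
\end{equation*}
The series $\lambda_{Q_J}(q,z)\in\Q[[z_i]]_i$ has constant term $1$, hence is invertible. Dividing both sides by $\lambda_{Q_J}(q,z)$ yields $\lambda_{Q\backslash Q_J}(q,z)=\lambda_{Q;0}(q,z)$.
\end{proof}
I do not expect any genuine obstacle here: all the substantive work is in Proposition~\ref{P:crystal} and in the derivation of the two factorizations, both of which precede the corollary. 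The only point deserving a word of care is the invertibility of $\lambda_{Q_J}(q,z)$, but this is automatic from its constant term being $1$; the cancellation itself takes place in the formal power series ring where no convergence issue arises.
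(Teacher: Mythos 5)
Your proof is correct and is exactly the argument the paper intends: the corollary follows immediately by comparing the two factorizations \eqref{E:crystal1} and \eqref{E:crystal2} and cancelling the common factor $\lambda_{Q_J}(q,z)$, which is invertible as a formal power series with constant term $1$. The additional justification you give for \eqref{E:crystal2} via Proposition~\ref{P:crystal} matches the paper's (implicit) derivation as well.
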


\vspace{.2in}

\centerline{\textbf{Acknowledgements}}

\vspace{.15in}

I would like to thank A. Chambert-Loir, T. Hausel, E. Letellier, H. Nakajima and Y. Soibelman for useful
discussions and correspondence.
As explained to us by Y. Soibelman, a formula similar to (\ref{E:theo}) has been
obtained independently in some recent joint work of his and Kontsevich.

\vspace{.3in}

\small{}

\vspace{4mm}

\noindent
O. Schiffmann, \texttt{olivier.schiffmann@math.u-psud.fr},\\
D\'epartement de Math\'ematiques, Universit\'e de Paris-Sud, B\^atiment 425
91405 Orsay Cedex, FRANCE.

\end{document}